 \newcommand
{\red}{
\color{red}}
 \newcommand{\blue}{\color{blue}}
\DeclareMathOperator{\diam}{diam}
\DeclareMathOperator{\supp}{supp}
\newcommand{\norm}[1]{\left\lVert#1\right\rVert}
\newcommand{\normdot}{{\|\!\cdot\!\|}}
\newcommand{\normc}[1]{\left\lvert#1\right\rvert}
\newcommand{\normdotc}{{|\!\cdot\!|}}
\newcommand{\Leb}{\mathscr{L}}
\newcommand{\N}{\mathbb{N}}
\newcommand{\R}{\mathbb{R}}
\newcommand{\de}{\ensuremath{\, \mathrm d}} 
\newcommand\restr[2]{{
  \left.\kern-\nulldelimiterspace 
  #1 
  \right|_{#2} 
  }}
\DeclarePairedDelimiter\abs{\lvert}{\rvert}%
\let\oldabs\abs
\def\abs{\@ifstar{\oldabs}{\oldabs*}}
\newcommand{\cd}{\mathsf{CD}}
\newcommand{\MCP}{\mathsf{MCP}}
\newcommand{\X}{\mathsf{X}}
\newcommand{\di}
{\mathsf d} 
\newcommand{\m}{\mathfrak m} 
\newcommand{\dis}{\mathcal D}
\newcommand{\sF}{sub-Finsler }
\newcommand{\sr}{sub-Riemannian }
\newcommand{\hei}{\mathbb{H}}
\newcommand{\Haus}{\mathscr{H}}
\renewcommand{\epsilon}{\varepsilon}
\title{\textbf{A review of the tangent space in sub-Finsler geometry and applications to the failure of the $\cd$ condition}}
\date{\today}
\author{ Mattia Magnabosco\footnote{Mathematical Institute, University of Oxford. \textit{E-mail}:  \href{mailto:mattia.magnabosco@maths.ox.ac.uk}{mattia.magnabosco@maths.ox.ac.uk}}\, \text{and}\, 
Tommaso Rossi\footnote{Laboratoire Jacques-Louis Lions, Sorbonne Universit\'e. \textit{E-mail}: \href{mailto:tommaso.rossi@inria.fr}{tommaso.rossi@inria.fr}} }
\newtheoremstyle{remark}
        {10pt}
        {10pt}
        {}
        {}
        {\itshape}
        {.}
        {.4em}
        {}
\newtheoremstyle{proof}
        {10pt}
        {10pt}
        {}
        {}
        {\itshape}
        {.}
        {.4em}
        {}
\newtheoremstyle{definition}
        {10pt}
        {10pt}
        {}
        {}
        {\bfseries}
        {.}
        {.4em}
        {}
\newtheoremstyle{theorem}
        {10pt}
        {10pt}
        {\slshape}
        {}
        {\bfseries}
        {.}
        {.4em}
        {}
\theoremstyle{theorem}
\newtheorem{theorem}{Theorem}[section]
\newtheorem{prop}[theorem]{Proposition}
\newtheorem{corollary}[theorem]
{Corollary}
\newtheorem{lemma}[theorem]{Lemma}
\theoremstyle{definition}
\newtheorem{definition}[theorem]{Definition}
\theoremstyle{remark}
\newtheorem{remark}[theorem]{Remark}
\theoremstyle
{proof}
\newtheorem*{pro}{Proof}
\popQED\end{pro}}
\renewcommand\xleftrightarrow[2][]{%
  \ext@arrow 9999{\longleftrightarrowfill@}{#1}{#2}}
\newcommand\longleftrightarrowfill@{%
  \arrowfill@\leftarrow\relbar\rightarrow}
\begin{document}

\maketitle

\begin{abstract}
We review the construction of the tangent space to a \sF manifold in the measured Gromov-Hausdorff sense. Under suitable assumptions on the measure, the metric measure tangent is described by the nilpotent approximation, equipped with a scalar multiple of the Lebesgue measure. We apply this result in the study of the Lott--Sturm--Villani curvature-dimension condition in sub-Finsler geometry. In particular, we show the failure of the $\cd$ condition in 3D-contact \sF manifolds, equipped with a bounded measure. 
\end{abstract}



\section{Introduction}

The celebrated curvature-dimension condition (or $\cd$ for short) à-la Lott--Sturm--Villani \cite{sturm2006-1,sturm2006,lott--villani2009} is a synthetic notion of Ricci curvature lower bounds (and dimensional upper bounds). More precisely, for a metric measure space, the $\cd(K,N)$ condition asks for a suitable $(K,N)$-convexity property of the R\'enyi entropy functionals in the Wasserstein space (see \cite[Def.\ 1.3]{sturm2006} for the precise definition). Here $K\in\R$ represents a lower bound on the (Ricci) curvature while $N\in(1,\infty]$ can be thought of as an upper bound on the dimension. Additionally, in a smooth Riemannian manifold, the $\cd$ condition is a consistent curvature-dimension bound, in the sense it is equivalent to a lower bound on the Bakry-\'Emery Ricci curvature tensor. 

A similar relation between curvature and $\cd$ condition does not hold in the \sr setting and \sF setting. Sub-Finsler geometry is a generalization of Finsler and sub-Riemannian geome\-try, where, given a smooth manifold $M$, a smoothly varying norm is defined only on a set of preferred directions, called the distribution. 
Under the so-called H\"ormander condition, the length-minimization procedure among admissible curves (namely those curves that are tangent to the distribution) yields a well-defined distance on $M$, called \sF distance. 
In the contributions \cite{Juillet2020,MR4562156,MR4623954}, it shown that every complete \sr manifold, equipped with a positive smooth measure, cannot satisfy the $\cd(K,N)$ condition for any choice of the parameters $K$ and $N$. The same result has been proved for the \sF Heisenberg group \cite{borzatashiro,borza2024measure,borza2024curvature} and for smooth \sF manifolds \cite{magnabosco2023failure}. In this paper, we generalize these results in two directions. First of all, we relax the assumptions on the measure by allowing bounded reference measures, in the sense of Definition \ref{def:bounded_measure}. Second of all, we prove the failure of the $\cd$ condition for 3D-contact \sF manifolds. We refer to Section \ref{sec:contact} for the precise statements of the results.

The strategy of the proofs is based on the stability of the $\cd$ condition with respect to the pointed measured Gromov-Hausdorff convergence. In particular, given a \sF manifold $M$, equipped with a bounded measure $\m$, we assume by contradiction that the $\cd(K,N)$ condition holds. We then pass to the tangent space in the pointed measured Gromov-Hausdorff sense. By stability of the $\cd$ condition, the metric measure tangents of $(M,\di_{SF},\m)$ satisfy the $\cd(0,N)$ condition. Therefore, to obtain a contradiction, it is enough to show that there exists a metric measure tangent of $(M,\di_{SF},\m)$ that does not satisfy the $\cd$ condition. 

The last step of the strategy is based on the characterization of the metric measure tangents to a \sF manifold. 
Loosely speaking, for a general metric measure space $(\X,\di,\m)$, a metric measure tangent at $x\in \X$ captures its infinitesimal geometry around the base point $x$.
In a Riemannian manifold $(M,\mathrm g)$, the metric measure tangent space at a point $x\in M$ coincides with $T_xM$, equipped with the scalar product $\mathrm g_x$ and with the Lebesgue measure. 

In a \sF manifold, identifying the tangent space becomes less trivial, as the infinitesimal model is instead a quotient of a \sF Carnot group\footnote{A Carnot group is a connected and simply connected Lie group with stratified Lie algebra.}. 
In \cite{MR806700,MR1421822}, it is shown that the metric tangent to a complete \sr manifold $M$ at a point $x\in M$ is identified by its nilpotent approximation at $x$. The nilpotent approximation can be regarded as a non-holonomic first-order approximation of the \sr structure at $x$ and, in turn, is isometric to a quotient of a \sr Carnot group (where a scalar product is defined on the first layer of its Lie algebra). In \cite{MR4645068}, the analogous result is proved in the greater generality of Lipschitz \sF manifolds with continuously varying norms (including also smooth \sF manifolds). For the convergence of the measure, adapting and extending the results of \cite{MR3388884,SRmeasures}, we characterize the blow-up of a general bounded measure on a \sF manifold. Here a bounded measure is a Borel measure which, in coordinates, is absolutely continuous with respect to the Lebesgue measure, with density bounded from above and below by strictly positive constants, see Definition \ref{def:bounded_measure}. The description of the metric measure tangents of a \sF manifold is the summarized in the following theorem.

\begin{theorem}\label{thm:tangente}
     Let $M$ be a complete \sF manifold, equipped with a bounded measure $\m$. Then, for $\m$-a.e. $q\in M$, there is a unique metric measure tangent space of $M$ at $q$ and it is its nilpotent approximation equipped with (a multiple of) the Lebesgue measure. 
 \end{theorem}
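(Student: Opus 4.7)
The plan is to work in privileged coordinates around $q$ and to blow up the metric measure structure via the associated family of anisotropic dilations, handling the metric and the measure convergence separately at $\m$-a.e.\ base point.

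First, I would fix a system of privileged coordinates $(U,\phi)$ centered at $q$, together with the associated one-parameter family of anisotropic dilations $\{\delta_\epsilon\}_{\epsilon>0}$ on $\R^n$ and homogeneous dimension $Q$. For the metric part, the rescaled distances $\epsilon^{-1}\di_{SF}(\delta_\epsilon\,\cdot,\delta_\epsilon\,\cdot)$ are known to converge uniformly on compacta to the nilpotent approximation distance $\hat{\di}_q$: this is due to Bella\"iche--Mitchell in the smooth \sr case and, in the Lipschitz \sF setting (continuously varying norm) that covers ours, it is the content of \cite{MR4645068}, which I would invoke directly.

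Next, I would handle the convergence of the rescaled measure. Writing $\phi_*\m=\rho\,\de\Leb^n$ with $\rho$ bounded above and below by strictly positive constants, I set
\begin{equation}
\m_\epsilon:=\epsilon^{-Q}(\delta_{1/\epsilon})_*\phi_*\m,
\end{equation}
whose density with respect to $\Leb^n$ is $x\mapsto \rho(\delta_\epsilon x)$ (the factor $\epsilon^Q$ cancels the Jacobian of $\delta_\epsilon$). At every Lebesgue point $q$ of $\rho$, a set of full $\Leb^n$-measure and hence of full $\m$-measure by the sandwich bound, dominated convergence combined with classical Lebesgue differentiation yields $\rho\circ \delta_\epsilon\to \rho(q)$ in $L^1_{\mathrm{loc}}(\R^n)$, and therefore the weak convergence $\m_\epsilon\weakto \rho(q)\Leb^n$. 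This step extends \cite{MR3388884,SRmeasures} from smooth densities to the merely bounded case.

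Combining the two convergences gives the convergence of the rescaled pointed metric measure spaces $(M,\epsilon^{-1}\di_{SF},\m_\epsilon,q)$ to the nilpotent approximation endowed with $\rho(q)\Leb^n$ in the pointed measured Gromov-Hausdorff sense; uniqueness of the tangent follows since the limit is identified independently of the chosen rescaling sequence. The main obstacle I anticipate is the Lebesgue-point argument: one must upgrade Euclidean-box averages to averages with respect to the anisotropic \sF balls, which requires a Vitali-type comparison between privileged-coordinate boxes and \sF balls at $q$, available via the ball-box theorem. A subtler technical point is to pair weak convergence of measures with metric Gromov-Hausdorff convergence into a joint pmGH statement, which I would handle by testing against Lipschitz compactly supported functions and exploiting the uniform volume growth guaranteed by the boundedness of the density.
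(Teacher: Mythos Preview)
Your proposal is correct and mirrors the paper's argument closely: privileged coordinates plus anisotropic dilations, metric convergence to the nilpotent approximation via \cite{MR4645068}, and measure blow-up at Lebesgue points of the density, with the anisotropic Lebesgue-point issue handled exactly as you anticipate through the ball-box theorem and Lebesgue differentiation on an asymptotically doubling space (this is the content of the paper's Lemma~\ref{lem:boh} and Definition~\ref{def:lebesgue_points}). One caution on wording: ``dominated convergence combined with classical Lebesgue differentiation'' is not quite the mechanism, since $\rho\circ\delta_\epsilon$ need not converge pointwise a.e.\ even at a Euclidean Lebesgue point of $\rho$; the $L^1_{\mathrm{loc}}$ convergence you want comes directly from the integral identity $\int_K|\rho\circ\delta_\epsilon-\rho(0)|\,\de\Leb^n=\epsilon^{-Q}\int_{\delta_\epsilon K}|\rho-\rho(0)|\,\de\Leb^n\to 0$, which is exactly the anisotropic Lebesgue-point condition you flag at the end.
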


\paragraph{Structure of the paper.} In Section \ref{sec:nilpotent}, we describe the nilpotent approximation of a \sF manifold. In Section \ref{sec:tangent}, we identify the blow-up of the bounded measure $\m$, leveraging its Lebesgue points, in the sense of Definition \ref{def:lebesgue_points}. In detail, we show that $\m$-almost every point is Lebesgue and, at those points, the blow-up is a scalar multiple of the Lebesgue measure of the correct dimension. Along the way, we also show that on equiregular manifolds, a measure is bounded if and only if it is locally Ahlfors regular. Finally, we prove Theorem \ref{thm:tangente}. We conclude the paper by detailing the aforementioned applications to the failure of the $\cd$ condition, see Section \ref{sec:contact}.

\subsection*{Acknowledgments} 
M.M. acknowledges support from the Royal Society through the Newton International Fellowship (award number: NIF$\backslash$R1$\backslash$231659). T.R. acknowledges support from the ANR-DFG project ``CoRoMo'' (ANR-22-CE92-0077-01). We wish to thank the organizers of the conference ``Metric Measure Spaces, Ricci Curvature, and Optimal Transport'' for the beautiful event. 

\section{Preliminaries}

Let $M$ be a smooth manifold of dimension $n$ and let $k\in\N$. A \emph{\sF structure} on $M$ is a couple $(\xi,\normdotc)$, such that: 
\begin{enumerate}[label=(\roman*)]
    \item $\xi: E\rightarrow TM$ is a morphism of vector bundles, where $E$ is a normed vector bundle over $M$, meaning that, for every $q\in M$, the fiber $E_q$ is a Banach space with norm $\normdotc_q$. In addition, the dependence $q\mapsto \normdotc_q$ is smooth.
    \item  The set of horizontal vector fields, defined as 
    \begin{equation}
    \dis := \big\{ \xi\circ\sigma \, :\, \sigma \in \Gamma(E) \big\} \subset \Gamma(TM),
    \end{equation}
    is a \emph{bracket-generating} family of vector fields (or it satisfies the H\"ormander condtion), namely, setting
    \begin{equation}
        {\rm Lie}_q(\dis):=\big\{X(q)\,:\, X\in\text{span}\{[X_1,\ldots,[X_{j-1},X_j]]\,:\, X_i\in\dis,j\in\N\} \big\},\qquad\forall\,q\in M,
    \end{equation}
    it holds that ${\rm Lie}_q(\dis)=T_qM$, for every $q\in M$.
\end{enumerate}
We say that $(M,\xi,\normdotc)$, or simply $M$ if there is no ambiguity, is a \sF manifold. 

\begin{remark}
    Every \sF
 structure $(\xi,\normdotc)$ is equivalent to a free one, meaning that we can (and will) always assume that $E=M\times\R^k$, for some $k\in \N$. See \cite[Sec.\ 3.1.4]{ABB-srgeom} for details. However, differently from the \sr setting, we can not ensure that the norm $\normdotc_q$ on the fiber $E_q=\{q\}\times\R^k$ is independent of $q$.    
\end{remark}

At every point $q\in M$ we define the \emph{distribution} at $q$ as 
\begin{equation}
\label{eq:distribution}
    \dis_q := \big\{ \xi(q,w) \, :\, w \in \R^k \big\}=\big\{X(q)\,:\,X\in\dis\big\} \subset T_qM. 
\end{equation}
This is a vector subspace of $T_qM$ whose dimension is called \emph{rank} (of the distribution) and denoted by $r(q):=\dim\dis_q\leq n$. Moreover, the distribution can be described by a family of horizontal vector fields: letting $\{e_i\}_{i=1, \dots, k}$ be the standard basis of $\R^k$, the \emph{generating family} is the set $\{X_i\}_{i=1, \dots, k}$, where 
\begin{equation}
\label{eq:NOSAUCE}
    X_i(q) := \xi(q, e_i) \qquad\forall\,q\in M, \quad \text{for }i= 1, \dots ,k.
\end{equation}
Then, according to \eqref{eq:distribution}, $\dis_q= \text{span} \{X_1(q), \dots, X_k(q)\}$. On the distribution we define the \emph{induced norm} as

\begin{equation*}
    \norm{v}_q := \inf\big\{\normc{w}_q \, :\, v = \xi(q,w) \big\} \qquad \text{for every }v\in \dis_q.
\end{equation*}
Since the infimum is actually a minimum, the function $\normdot_q$ is a norm on $\dis_q$, so that $(\dis_q, \normdot_q)$ is a normed space. Moreover, the dependence on the base point $M\ni q\mapsto\normdot_q$ is smooth.

A continuous curve $\gamma: [0,1]\to M$ is \emph{admissible} if its velocity $\dot\gamma(t)$ is defined almost everywhere and there exists a function $u=(u_1,\ldots,u_k)\in L^2([0,1];\R^k)$ such that
\begin{equation}
\label{eq:admissible_curve}
    \dot\gamma(t)=\sum_{i=1}^k u_i(t)X_i(\gamma(t)),\qquad\text{for a.e. }t\in [0,1].
\end{equation}
The function $u$ is called \emph{control}. Furthermore, given an admissible curve $\gamma$, there exists $\bar u=(\bar u_1,\dots, \bar u_k): [0,1]\to \R^k$ such that 
\begin{equation}
\label{eq:minimal_control}
    \dot\gamma(t) = \sum_{i=1}^k \bar u_i(t) X_i(\gamma(t)),
\qquad\text{and}\qquad
    \norm{\dot\gamma(t)}_{\gamma(t)} = \normc{\bar u(t)}_{\gamma(t)},\qquad\text{for a.e. }t\in[0,1].
\end{equation}
The function $\bar u$ is called \emph{minimal control}, and the map $t\mapsto \normc{\bar u(t)}_{\gamma(t)}$ belongs to $L^\infty([0,1])$, cf. \cite[Lem. 3.12]{ABB-srgeom}. We define the \emph{length} of an admissible curve as
\begin{equation}
    \ell(\gamma):=\int_0^1 \norm{\dot\gamma(t)}_{\gamma(t)} \de t\in[0,\infty).
\end{equation}
Finally, for every pair of points $q_0,q_1\in M$, we define the \emph{\sF distance} between them as
\begin{equation}
\label{eq:distance}
    \di (q_0,q_1)= \inf \left\{\ell(\gamma)\, :\, \gamma \text{ admissible, } \gamma(0)=q_0 \text{ and }\gamma(1)=q_1\right\}.
\end{equation}
Since every norm on $\R^k$ is equivalent to the standard scalar product on $\R^k$, it follows that the \sr structure on $M$ induces a distance which is locally equivalent to $\di$. Namely, denoting by $\di_{SR}$ the induced \sr distance, for every compact $K\subset M$ there exist constants $C>c>0$ (depending on $K$) such that
\begin{equation}
\label{eq:equivalent_sr_distance}
    c\,\di_{SR}\leq \di \leq C\di_{SR},\qquad\text{on }K\times K.
\end{equation}
Thus, as a consequence of the classical Filippov and Chow--Rashevskii theorems in sub-Rieman\-nian geometry, 
we obtain the following result.
\begin{prop}
\label{prop:mrchow}
    Let $M$ be a \sF manifold. The \sF distance is finite, continuous on $M\times M$ and the induced topology is the manifold one. In particular, $M$ is locally compact.
\end{prop}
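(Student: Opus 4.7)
The plan is to transfer the conclusions from the sub-Riemannian setting, where they are classical (Filippov, Chow--Rashevskii), to the sub-Finsler one via the local equivalence \eqref{eq:equivalent_sr_distance}.

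First, I would observe that replacing the smoothly varying norm $\normdotc_q$ on $E$ with any smoothly varying Euclidean one yields a sub-Riemannian structure with the same distribution $\dis$ and the same set of admissible curves. Since the bracket-generating hypothesis depends only on $\dis$, it survives this replacement, so the classical Chow--Rashevskii theorem applies: the sub-Riemannian distance $\di_{SR}$ is finite on $M\times M$, continuous, and induces the manifold topology.

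Next, I would upgrade these properties to the sub-Finsler distance $\di$ using \eqref{eq:equivalent_sr_distance}. For finiteness, given $q_0,q_1\in M$, I would pick an admissible curve $\gamma\colon [0,1]\to M$ with $\gamma(0)=q_0$ and $\gamma(1)=q_1$ of finite sub-Riemannian length (which exists by Chow--Rashevskii). Since $\gamma([0,1])$ is compact, \eqref{eq:equivalent_sr_distance} applied on a compact neighborhood of it yields that the sub-Finsler length of $\gamma$ is at most $C$ times its sub-Riemannian length, hence finite, and therefore $\di(q_0,q_1)<\infty$. For continuity at a point $(p_0,p_1)\in M\times M$, I would fix a compact neighborhood $K$ of $\{p_0,p_1\}$ and combine the triangle inequality with \eqref{eq:equivalent_sr_distance} on $K\times K$ to get $|\di(q_0,q_1)-\di(p_0,p_1)|\le \di(q_0,p_0)+\di(q_1,p_1)\le C\bigl(\di_{SR}(q_0,p_0)+\di_{SR}(q_1,p_1)\bigr)$ for $(q_0,q_1)$ close to $(p_0,p_1)$; the right-hand side tends to zero by continuity of $\di_{SR}$. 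For the identification of the topology, \eqref{eq:equivalent_sr_distance} shows that on a compact neighborhood of any point the $\di$- and $\di_{SR}$-balls are comparable, so the two distances induce the same topology, which is the manifold one.

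I do not expect any real obstacle; the only delicate point is that \eqref{eq:equivalent_sr_distance} is stated only on compact sets, which forces the finiteness argument to exploit the compactness of $\gamma([0,1])$ rather than any naive global application of the inequality. Local compactness is then immediate from the coincidence of the induced and smooth manifold topologies.
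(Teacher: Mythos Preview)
Your proposal is correct and follows exactly the route the paper indicates: the paper does not spell out a proof but simply states the proposition as an immediate consequence of the local equivalence \eqref{eq:equivalent_sr_distance} together with the classical Filippov and Chow--Rashevskii theorems in the sub-Riemannian case. Your write-up just fills in the details of this transfer; the only minor remark is that finiteness can be obtained even more directly by applying \eqref{eq:equivalent_sr_distance} to the compact set $K=\{q_0,q_1\}$, without passing through lengths.
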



\noindent We say that a \sF manifold $(M,\xi, \normdotc)$ is \emph{complete} if the metric space $(M,\di)$ is complete.

\section{The nilpotent approximation of a sub-Finsler manifold}
\label{sec:nilpotent}

We present the notion of nilpotent approximation of a sub-Finsler manifold, see \cite{MR3308372,MR1421822} for details. This construction is analogous to the nilpotent approximation of a \sr manifold, taking into account the necessary adaptations to include norms on the distribution. 

\paragraph{The flag of the distribution.}
Let $M$ be an $n$-dimensional \sF manifold with distribution $\dis$. We define the \emph{flag} of $\dis$ as the sequence of subsheafs $\dis^j\subset TM$ such that
\begin{equation}
\dis^1=\dis,\qquad \dis^{j+1}=\dis^j+[\dis,\dis^j],\qquad\forall\,j\geq 1,
\end{equation}
with the convention that $\dis^0=\{0\}$. Under the H\"ormander condition, the flag of the distribution defines an exhaustion of $T_qM$, for any point $q\in M$, i.e. there exists $s(q)\in\N$ such that:
\begin{equation}
\label{eq:sr_flag}
\{0\}=\dis^0_q\subset\dis^1_q\subset \ldots\subset \dis^{s(q)-1}_q\subsetneq \dis^{s(q)}_q=T_qM.
\end{equation}
The number $s(q)$ is called \emph{degree of nonholonomy} at $q$. We set $n_j(q) = \dim \dis^j_q$, for any $j\geq 0$, then the collection of $s(q)$ integers 
\begin{equation}
\left(n_1(q),\ldots,n_{s(q)}(q)\right)
\end{equation}
is called \emph{growth vector} at $q$, and we have $n_{s(q)}(q)=n=\dim M$. Associated with the growth vector, we can define the \emph{weights} $w_i(q)$ of $\dis$ at $q$: for every $i\in\{1,\ldots,n\}$, we set 
\begin{equation}
\label{eq:sr_weights}
w_i(q):=j,\qquad\text{if and only if}\qquad n_{j-1}(q)+1\leq i\leq n_j(q).
\end{equation}
A point $q\in M$ is said to be \emph{regular} if the growth vector is constant in a neighborhood of $q$, and \emph{singular} otherwise. The \sF structure on $M$ is said to be \emph{equiregular} if all points of $M$ are regular. In this case, the weights are constant {\red as well} on $M$ {\blue as well}. Finally, given any $q\in M$, we define the \emph{homogeneous dimension} of $M$ at $q$ as 
\begin{equation}
\label{eq:homogoeneous_dim}
\mathcal{Q}(q) = \sum_{i=1}^{s(q)
} i ( n_i(q)-n_{i-1}(q) ) =\sum_{i=1}^n w_i(q).
\end{equation}
We recall that, if $q$ is regular, then $\mathcal{Q}(q)$ coincides with the Hausdorff dimension of $(M,\di)$ at $q$, cf.\ \cite{MR806700}. Moreover, $\mathcal{Q}(q)>n$, for any $q\in M$ such that $\dis_q\subsetneq T_qM$. 

\paragraph{Privileged coordinates.}
Let $M$ be a \sF manifold with generating family $\{X_i\}_{i=1,\ldots,k}$ and $f$ be a smooth function at $q\in M$. We call \emph{nonholonomic derivative} of order $\ell\in \N$ of $f$ at $q$ the quantity
\begin{equation}
X_{j_1}\cdots X_{j_\ell}f(q),
\end{equation}
for any family of indexes $\{j_1,\ldots,j_\ell\}\subset\{1,\ldots,k\}$. Then, the \emph{nonholonomic order} of $f$ at $q$ is 
\begin{equation}
\mathrm{ord}_q(f)=\min\left\{\ell\in\N : \exists\{j_1,\ldots,j_\ell\}\subset\{1,\ldots,k\}\text{ s.t. }X_{j_1}\cdots X_{j_\ell}f(q)\neq 0\right\}.
\end{equation}

\begin{definition}[Privileged coordinates]
Let $M$ be a $n$-dimensional \sF manifold and $q\in M$. A system of local coordinates $(z_1,\ldots,z_n)$ centered at $q$ is said to be \emph{privileged} at $q$ if 
\begin{equation}
\mathrm{ord}_q(z_j) = w_j(q),\qquad\forall\, j=1,\ldots,n.
\end{equation}
\end{definition}

\noindent We remark that privileged coordinates always exist, see \cite[Sec.\ 2.1.2]{MR3308372} for the construction.

\paragraph{Nilpotent approximation.}
Let $M$ be a \sF manifold and let $q\in M$ with weights as in \eqref{eq:sr_weights}. Consider $\widehat\psi^q=(z_1,\ldots,z_n)\colon U\rightarrow V$ a chart of privileged coordinates centered at $q$, where $U\subset M$ is a relatively compact neighborhood of $q$ and $V\subset\R^n$ is a neighborhood of $0$. Then, for any $\varepsilon\in\R$, we can define the \emph{dilation} at $q$ as 
\begin{equation}
\label{eq:anis_dil}
\delta_\varepsilon\colon\R^n\rightarrow\R^n;\qquad\delta_\varepsilon
(z)=\left(\varepsilon^{w_1(q)}z_1,\ldots,\varepsilon^{w_n(q)}z_n\right).
\end{equation} 
Using such dilations, we can obtain the nilpotent (or first-order) approximation of the generating family $\{X_i\}_{i=1,\ldots,k}$. Indeed, for any $i=1\ldots,k$ we set $Y_i:=\widehat\psi^q_*X_i$ and define the vector field 
\begin{equation}
\label{eq:nilpot_approx}
 Y_i^\varepsilon:=\varepsilon\delta_{\frac{1}{\varepsilon}*}(Y_i),\qquad \forall\,\varepsilon>0.
\end{equation}
Observe that $Y_i^\varepsilon$ is defined on the set $\delta_{1/\varepsilon}(V)$ and, when $\varepsilon\to 0$, this set tends to cover the whole $\R^n$. Then, according to \cite[Lem. 10.58]{ABB-srgeom}, for every $i=1\ldots,k$ the family $\{Y_i^\varepsilon\}_{\varepsilon>0}$ admits a limit in the $C^\infty$-topology of $\R^n$ as $\varepsilon\to 0$.
In particular, for every $i=1\ldots,k$, we can define the vector field 
\begin{equation}\label{eq:convVF}
    \widehat X_i^q :=\lim_{\varepsilon\to0} Y_i^\varepsilon \in\Gamma(T\R^n).
\end{equation}

\begin{theorem}
\label{thm:nilp_approx}
The family $\{\widehat X_1^q,\ldots,\widehat X_k^q\}$ of vector fields on $\R^n$ generates a nilpotent Lie algebra of step $s(q)=w_n(q)$ and satisfies the H\"ormander condition.
\end{theorem}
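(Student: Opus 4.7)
The plan is to exploit the weighted homogeneity that privileged coordinates are designed to reveal. Assign to each monomial vector field $z^\alpha \partial_{z_j}$ on $\R^n$ the weight $\sum_{i=1}^n w_i(q)\alpha_i - w_j(q)\in\mathbb{Z}$, characterized by $\delta_\varepsilon^*(z^\alpha \partial_{z_j}) = \varepsilon^{\sum_i w_i(q)\alpha_i - w_j(q)}\, z^\alpha \partial_{z_j}$. The defining property of the privileged chart $\widehat\psi^q$ forces the Taylor expansion at $0$ of $Y_i = \widehat\psi^q_* X_i$ to contain only monomial terms of weight $\geq -1$. Since the rescaling in \eqref{eq:nilpot_approx} multiplies each weight-$d$ component by $\varepsilon^{d+1}$, the limit $\widehat X_i^q$ isolates precisely the weight-$(-1)$ part of $Y_i$; in particular $\widehat X_i^q$ is a polynomial vector field on $\R^n$, $\delta_\varepsilon$-homogeneous of weight $-1$.

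The nilpotency follows at once because weights add under Lie brackets: any $\ell$-fold iterated bracket $[\widehat X_{j_1}^q,[\ldots,\widehat X_{j_\ell}^q]\ldots]$ is $\delta_\varepsilon$-homogeneous of weight $-\ell$. A weight-$(-\ell)$ polynomial vector field must however be a combination of monomials $z^\alpha \partial_{z_j}$ satisfying $\sum_i w_i(q)\alpha_i = w_j(q) - \ell \geq 0$, which forces $\ell \leq w_j(q) \leq w_n(q) = s(q)$. Therefore every iterated bracket of length strictly greater than $s(q)$ vanishes identically, and the Lie algebra generated by $\{\widehat X_1^q,\ldots,\widehat X_k^q\}$ is nilpotent of step at most $s(q)$.

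To see that the step is exactly $s(q)$ and that H\"ormander holds, I would argue first at the origin. Because all terms discarded in passing from $Y_i$ to $\widehat X_i^q$ have strictly positive weight, any $\ell$-fold iterated bracket of the $\widehat X_i^q$ agrees at $0$, modulo the $(\ell-1)$-th layer of the flag read through the privileged chart, with the corresponding bracket of the $X_i$ at $q$. Consequently the flag of the nilpotent approximation at $0$ reproduces the growth vector $(n_1(q),\ldots,n_{s(q)}(q))$ and spans $T_0 \R^n$ in exactly $s(q)$ steps, yielding both H\"ormander at $0$ and the sharpness of the step. To propagate H\"ormander from $0$ to every point of $\R^n$, I would invoke the standard fact that the finite-dimensional nilpotent Lie algebra generated by $\{\widehat X_i^q\}$ integrates to a simply connected nilpotent Lie group of diffeomorphisms of $\R^n$, acting transitively via the exponential map and preserving the generating family up to linear combinations, so that H\"ormander at one point spreads to all of $\R^n$.

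The principal technical obstacle is the rigorous weight analysis of $Y_i$ in privileged coordinates together with the $C^\infty$-convergence of $Y_i^\varepsilon$ to $\widehat X_i^q$, but this is precisely the content of \cite[Lem.\ 10.58]{ABB-srgeom} already invoked in \eqref{eq:convVF}. Once that ingredient is granted, the remainder of the argument is purely algebraic and, notably, independent of the sub-Finsler norm $\normdotc$: the statement concerns only the generating family of vector fields, so the proof reduces to its sub-Riemannian counterpart from \cite{MR3308372,MR1421822}.
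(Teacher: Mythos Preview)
Your sketch is correct and follows the standard weighted-homogeneity argument from \cite{MR3308372,MR1421822}. The paper, however, gives no proof at all: it simply observes that the statement involves only the generating vector fields and not the norm, so the sub-Riemannian result from those same references applies verbatim. Your final paragraph makes exactly this observation, so you and the paper agree on the essential point; you have just unpacked what the cited proof actually contains.

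One minor comment on your propagation step: invoking a transitive group action works, but the more economical route (and the one typically presented in \cite{MR3308372}) is to note that the dilations $\delta_\varepsilon$ preserve the rank of the iterated-bracket span, so the rank at any $z\in\R^n$ equals the rank at $\delta_\varepsilon(z)$; letting $\varepsilon\to 0$ and using lower semicontinuity of the rank near $0$, where you already established it equals $n$, yields H\"ormander everywhere without integrating the Lie algebra.
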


\noindent This theorem is well-known in the \sr setting, cf.\  \cite{MR3308372}. Being the result purely algebraic, it carries over to the \sF setting without modifications. Recall that a Lie algebra is said to be nilpotent of step $s$ if $s$ is the smallest integer such that all the brackets of length greater than $s$ are zero. 

According to Theorem \ref{thm:nilp_approx}, we can then define the nilpotent approximation of $(M, \xi, \normdotc)$ at $q$.

\begin{definition}[Nilpotent approximation]
\label{def:nilpotent_approx}
Let $M$ be a \sF manifold and let $q\in M$. Consider the family $\{\widehat X_1^q,\ldots,\widehat X_k^q\}$ on $\R^n$, provided by  Theorem \ref{thm:nilp_approx}. The \emph{nilpotent approximation} of $M$ at the point $q$ is the \sF structure $(\widehat\xi^q, \normdotc_q)$ on $\R^n$ where
\begin{equation}
    \widehat\xi^q: \R^n\times \R^k\to \R^{2n},\qquad \widehat\xi^q(z,e_i):=(z,\widehat X_i^q(z)), \quad \text{for }i=1,\dots,k.
\end{equation} 
\end{definition}

\noindent In the following, we denote by $\widehat\dis^q$ and by $\widehat\di^q$, the distribution and the \sF distance associated to the \sF structure $(\widehat\xi^q, \normdotc_q)$ on $\R^n$, respectively.

\begin{remark}\label{rmk:dilation_inv}
    By construction, the vector fields $\widehat X_i^q$ are homogeneous of degree $-1$ with respect to $\delta_\varepsilon$, namely 
    \begin{equation}
    \label{eq:hom_Xi}
         \widehat X_i^q=\varepsilon(\delta_{\frac{1}{\varepsilon}})_*\widehat X_i^q, \qquad\forall\,\varepsilon>0,\quad  \text{for }i=1,\dots,k.
    \end{equation}
    Moreover, by definition, the norm on $\widehat\dis^q$ is given by 
    \begin{equation}
    \label{eq:def_nilp_norm}
    \norm{v}^q_z:=\inf\left\{|u|_q:\widehat\xi^q(z,u)=\sum_{i=1}^k u_i \widehat X_i^q(z)=v\right\},\qquad\forall\,v\in\widehat\dis^q_z,\ \forall\,z\in \R^n,
\end{equation}
    and it is $1$-homogeneous with respect to $\delta_\varepsilon$, meaning that $\delta_\varepsilon^*\norm{\cdot}^q_z=\varepsilon\norm{\cdot}^q_z$. Thus, the distance $\widehat\di^q$ of the nilpotent approximation is $1$-homogeneous with respect to $\delta_\varepsilon$. In addition, since $(\widehat\xi^q, \normdotc_q)$ induces a locally compact topology, by homogeneity, $(\R^n,\widehat\di^q)$ is complete.
\end{remark}

\begin{theorem}
\label{thm:francomanca}
    The nilpotent approximation of $M$ at a point $q$ is isometric to the quotient of a \sF Carnot group $G^q/H^q$, for some normal subgroup $H^q\lhd G^q$, equipped with a left-invariant norm. If $q$ is regular, then $
H^q=\{0\}$ and the nilpotent approximation is isometric to a \sF Carnot group.    
\end{theorem}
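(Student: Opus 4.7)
The plan is to mimic the sub-Riemannian construction, checking that the algebraic part is insensitive to replacing the inner product on the distribution with the norm $\normdotc_q$.

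First I would build the group $G^q$. Let $\widehat{\mathfrak{g}}^q \subset \Gamma(T\R^n)$ be the Lie algebra generated by $\widehat X_1^q,\ldots,\widehat X_k^q$. By Theorem \ref{thm:nilp_approx} it is nilpotent of step $s(q)$, hence finite dimensional. Remark \ref{rmk:dilation_inv} gives $\delta_\varepsilon^*\widehat X_i^q = \varepsilon^{-1}\widehat X_i^q$, so iterated brackets of length $\ell$ are homogeneous of degree $-\ell$, which provides a natural stratification
\begin{equation}
    \widehat{\mathfrak{g}}^q = V_1 \oplus V_2 \oplus \cdots \oplus V_{s(q)}, \qquad [V_i,V_j]\subset V_{i+j},\qquad V_1 = \mathrm{span}\{\widehat X_1^q,\ldots,\widehat X_k^q\}.
\end{equation}
Thus $\widehat{\mathfrak{g}}^q$ is the Lie algebra of a (unique up to isomorphism) connected and simply-connected stratified Lie group $G^q$, i.e.\ a Carnot group. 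I would transport the norm $\normdotc_q$ on $\R^k$ to the first layer $V_1$ via the surjection $\R^k\ni u \mapsto \sum u_i \widehat X_i^q \in V_1$, equip $V_1$ with the quotient norm, and extend it by left-translation to a left-invariant sub-Finsler structure on $G^q$.

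Second, I would set up the action and identify the normal subgroup. The vector fields $\widehat X_i^q$ are polynomial on $\R^n$ (by nilpotency in privileged coordinates), hence complete, and their flows integrate the infinitesimal action of $\widehat{\mathfrak{g}}^q$ into a smooth right action of $G^q$ on $\R^n$. Let $H^q \subset G^q$ be the stabilizer of $0\in\R^n$. Since the action preserves the grading (dilations are group automorphisms of $G^q$ that intertwine the action with $\delta_\varepsilon$ on $\R^n$) and the action is by left translations on $G^q$ pushed forward through the orbit map, the standard argument carried out in \cite{MR3308372,MR1421822} in the sub-Riemannian case shows that $H^q$ is a normal subgroup, with Lie algebra $\mathfrak{h}^q$ a graded ideal of $\widehat{\mathfrak{g}}^q$. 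The orbit map $\Phi\colon G^q/H^q \to \R^n$, $\Phi([g])=g\cdot 0$, is then well defined and smooth. By the H\"ormander property of $\{\widehat X_i^q\}$ together with the transitivity of the infinitesimal action, $\Phi$ is a local diffeomorphism at the identity class, and by the dilation equivariance (both sides are $\delta_\varepsilon$-homogeneous) it is a diffeomorphism globally.

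Third, I would promote $\Phi$ to an isometry. By construction, $\Phi$ maps horizontal curves in $G^q/H^q$ exactly to admissible curves in $(\R^n,\widehat\di^q)$, and the left-invariant norm on the first layer corresponds, at $0$, to the nilpotent norm defined in \eqref{eq:def_nilp_norm}. Since both distances are defined as infima of lengths of horizontal curves measured with these norms and $\Phi$ is a bijection on horizontal curves preserving length, it is an isometry. Finally, if $q$ is regular, the growth vector of the nilpotent approximation at $0$ equals the growth vector of $M$ at $q$ (this is a standard consequence of equiregularity and of \eqref{eq:convVF}); adding the layer dimensions via \eqref{eq:homogoeneous_dim} forces $\dim \widehat{\mathfrak{g}}^q = n$, so $\mathfrak{h}^q=\{0\}$ and hence $H^q=\{0\}$, and $G^q/H^q = G^q$.

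The main obstacle I anticipate is the identification of the metric in the quotient with $\widehat\di^q$: while the Lie-algebraic construction and the existence of the normal subgroup $H^q$ transfer verbatim from the sub-Riemannian literature, one must check that the quotient norm on $V_1/\mathfrak{h}^q\cap V_1$ and the nilpotent norm in \eqref{eq:def_nilp_norm} coincide, and that the quotient distance is still computed as an infimum over horizontal curves (using a measurable selection of lifts). This is a routine but not entirely trivial verification, essentially because $\normdotc_q$ is not strictly convex in general, so the choice of optimal controls requires care, but it is exactly the argument already used in \cite{MR4645068} in the sub-Finsler setting.
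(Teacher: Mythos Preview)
Your proposal is correct and follows essentially the same route as the paper. The paper does not give a self-contained argument either: it simply points to \cite[Thm.~5.21]{MR1421822} for the algebraic construction of $G^q$, $H^q$ and the isometry, and then observes that the only new ingredient in the sub-Finsler case is the norm, which is left-invariant because the generating family $\{\widehat X_i^q\}$ is (cf.~\eqref{eq:def_nilp_norm}). Your sketch is a faithful unpacking of exactly that blueprint, and your ``main obstacle'' is precisely the one-line remark the paper makes about left-invariance of the norm.
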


\noindent The proof of the previous theorem follows the blueprint of \cite[Thm.\ 5.21]{MR1421822}. The main difference lies of course in the norm, which is left-invariant, since the generating family is so (cf. \eqref{eq:def_nilp_norm}), and thus the metric structure of the nilpotent approximation is indeed the one of a (quotient of a) \sF Carnot group. 

\paragraph{Ball-Box theorem.} We record here an important result that can be stated using privileged coordinates. Its proof follows the equivalence between distances \eqref{eq:equivalent_sr_distance} and the corresponding \sr result, cf. \cite[Thm.\ 10.67]{ABB-srgeom}. 

\begin{theorem}[Ball-Box]
\label{thm:ball-box}
    Let $M$ be a complete \sF manifold, let $q\in M$ and let $\widehat\psi^q:U\to \R^n$ be a chart of privileged coordinates at $q$. Then, there exist constants $C_q\geq 1$ and $r_q>0$ such that
    \begin{equation}
        \text{Box}^q\left(\frac1{C_q}r\right) \subset \widehat\psi^q(B^\di(q,r))\subset \text{Box}^q(C_qr),\qquad\forall\, r\in (0,r_q)
    \end{equation}
    where $\text{Box}^q(r):=\{(z_1,\ldots,z_n) : |z_i|\leq r^{w_i(q)}, 1 \leq i \leq n\}$.
\end{theorem}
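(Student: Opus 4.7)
The proof plan is to reduce the sub-Finsler Ball-Box theorem to its sub-Riemannian counterpart stated in \cite[Thm.\ 10.67]{ABB-srgeom}, exploiting the fact that the privileged coordinates depend only on the distribution $\dis$ (through the weights $w_i(q)$ defined via the flag \eqref{eq:sr_flag}--\eqref{eq:sr_weights}), and not on the norm $\normdotc$. Hence, the very same chart $\widehat\psi^q$ is privileged both for the sub-Finsler structure $(\xi,\normdotc)$ and for the sub-Riemannian structure obtained by replacing $\normdotc_q$ with the standard Euclidean norm on $\R^k$.

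The first step is to invoke the sub-Riemannian Ball-Box theorem for this auxiliary sub-Riemannian structure, which yields constants $\widetilde C_q\geq 1$ and $\widetilde r_q>0$ such that
\begin{equation}
    \text{Box}^q\!\left(\tfrac{r}{\widetilde C_q}\right)\subset \widehat\psi^q\!\left(B^{\di_{SR}}(q,r)\right)\subset \text{Box}^q(\widetilde C_q\, r),\qquad \forall\,r\in(0,\widetilde r_q).
\end{equation}
The second step is to transfer these inclusions to $\di$-balls via the local equivalence \eqref{eq:equivalent_sr_distance}. Concretely, I pick a compact neighborhood $K$ of $q$ on which there exist constants $C>c>0$ with $c\,\di_{SR}\le\di\le C\,\di_{SR}$, and choose $r_q>0$ small enough that the balls $B^\di(q,r)$ and $B^{\di_{SR}}(q,r/c)$ are contained in $K$ for $r\in(0,r_q)$. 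On this range, the two-sided comparison of distances immediately gives
\begin{equation}
    B^{\di_{SR}}\!\left(q,\tfrac{r}{C}\right)\subset B^\di(q,r)\subset B^{\di_{SR}}\!\left(q,\tfrac{r}{c}\right).
\end{equation}
Combining these inclusions with the sub-Riemannian Ball-Box, and applying $\widehat\psi^q$, one obtains
\begin{equation}
    \text{Box}^q\!\left(\tfrac{r}{C\,\widetilde C_q}\right)\subset \widehat\psi^q\!\left(B^\di(q,r)\right)\subset \text{Box}^q\!\left(\tfrac{\widetilde C_q}{c}\,r\right),
\end{equation}
which is the desired statement with $C_q:=\max\{C\,\widetilde C_q,\widetilde C_q/c\}$ and $r_q$ possibly further reduced so that $r/c<\widetilde r_q$.

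The argument is essentially bookkeeping once the two ingredients (sub-Riemannian Ball-Box and local distance equivalence) are in place; the only care point is verifying that the same privileged coordinates serve both structures, but this is immediate because privileged coordinates are defined through the nonholonomic order and the weights $w_i(q)$, which depend solely on iterated brackets of the horizontal vector fields $\{X_1,\ldots,X_k\}$ and not on the norm $\normdotc_q$. Consequently, no genuinely new computation is required beyond checking that $r_q$ can be chosen uniformly on a single compact neighborhood, which follows from Proposition \ref{prop:mrchow} and local compactness of $M$.
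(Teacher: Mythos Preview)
Your proof is correct and follows exactly the approach the paper indicates: the paper does not give a detailed proof but simply states that it ``follows the equivalence between distances \eqref{eq:equivalent_sr_distance} and the corresponding \sr result, cf.\ \cite[Thm.\ 10.67]{ABB-srgeom}.'' Your argument fleshes out precisely these two ingredients, including the observation that privileged coordinates depend only on the distribution and not on the norm, so the same chart $\widehat\psi^q$ serves both structures.
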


\begin{remark}
\label{rmk:uniform_ball-box}
    If $M$ is equiregular, the constants in the previous theorem can be chosen uniformly on compact sets. Namely, for every $K\subset M$ compact, the constants $C_q$ and $r_q$ of Theorem \ref{thm:ball-box} can be taken to be independent of $q\in K$, cf. \cite[Thm.\ 2.3]{MR3308372}.
\end{remark}

\section{The metric measure tangent to a \sF manifold}
\label{sec:tangent}

In the present section, we show that the nilpotent approximation of a complete \sF manifold $M$ at $q$, equipped with (a scalar multiple of) the Lebesgue measure, is its metric measure tangent. The metric side of the result is contained in \cite{MR4645068}. Below, we give a self-contained construction of the metric tangent. 

\subsection{The approximating family of \sF structures}


Let $M$ be a complete \sF manifold and let $q\in M$. Fix a chart of privileged coordinates centered at $q$, $(z_1,\ldots,z_n):U\to V\subset\R^n$. With a slight abuse of notation, we identify $M$ with $\R^n$ and work in coordinates. For every $\varepsilon>0$, we define the \sF structure $(\xi^\varepsilon, \normdotc^\varepsilon)$ on $\delta_{1/\varepsilon}(V)$ as 
\begin{align}
    \xi^\varepsilon&: \delta_{1/\varepsilon}(V)\times \R^{k}\to \R^{2n},\qquad \xi^\varepsilon(z,e_i):=(z,Y_i^\varepsilon(z))\qquad \text{for }i=1,\dots k.\\[12pt]
    \normdotc^\varepsilon &:\delta_{1/\varepsilon}(V)\times \R^{k}\to \R, \qquad\qquad \,|u|^\varepsilon_z := |(u_1,\dots,u_k)|_{\delta_\varepsilon(z)},
\end{align}
where the vector fields $Y_i^\varepsilon$ are defined in \eqref{eq:nilpot_approx}. For every $\varepsilon>0$, the length-minimization procedure \eqref{eq:distance} defines a distance on $\delta_{1/\varepsilon}(V)$, which we denote by $\di^\varepsilon$. Note that $\di^1\equiv\di$ on $V\times V$.

A first important property of the approximating structure is a kind of homogeneity with respect to the anisotropic dilations. 

\begin{lemma}
\label{lem:scaling}
    For every $\varepsilon>0$ it holds that 
    \begin{equation}\label{eq:scaling}
        \di^\epsilon\big(\delta_{1/\varepsilon}(x),\delta_{1/\varepsilon}(y)\big)= \frac{1}{\varepsilon} \di (x, y), \qquad \text{for every }x, y \in V.
    \end{equation}
\end{lemma}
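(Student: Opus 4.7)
The plan is to show that the dilation $\delta_{1/\varepsilon}$ induces a bijection between admissible curves for the original structure and admissible curves for the $\varepsilon$-structure, under which the length rescales exactly by a factor $1/\varepsilon$. The identity on distances will then follow by taking the infimum.

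First I would exploit the defining relation $Y_i^\varepsilon = \varepsilon\,\delta_{1/\varepsilon *}Y_i$, equivalently $(\delta_\varepsilon)_* Y_i^\varepsilon = \varepsilon Y_i$. Given an admissible curve $\gamma:[0,1]\to V$ for the original structure with control $u\in L^2([0,1];\R^k)$, set $\tilde\gamma := \delta_{1/\varepsilon}\circ\gamma$. Differentiating and pushing $\dot\gamma$ forward by $d\delta_{1/\varepsilon}$ yields, thanks to the pushforward relation above,
\begin{equation}
    \dot{\tilde\gamma}(t) \;=\; \sum_{i=1}^k u_i(t)\,(\delta_{1/\varepsilon})_*Y_i\bigl(\tilde\gamma(t)\bigr) \;=\; \sum_{i=1}^k \frac{u_i(t)}{\varepsilon}\,Y_i^\varepsilon\bigl(\tilde\gamma(t)\bigr),
\end{equation}
so that $\tilde\gamma$ is admissible for $(\xi^\varepsilon,\normdotc^\varepsilon)$ with control $u/\varepsilon$. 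Since $\delta_\varepsilon\circ\delta_{1/\varepsilon}=\id$, the map $\gamma\mapsto\tilde\gamma$ is a bijection between admissible curves in $V$ and admissible curves in $\delta_{1/\varepsilon}(V)$, with controls related by $\tilde u = u/\varepsilon$.

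Next I would transfer the bijection to the induced horizontal norms. By definition, the $\varepsilon$-norm of a control is $|\tilde u|^\varepsilon_{\tilde z} = |\tilde u|_{\delta_\varepsilon(\tilde z)}$. Hence, if $\tilde z=\delta_{1/\varepsilon}(z)$ and $\tilde u=u/\varepsilon$,
\begin{equation}
    |\tilde u|^\varepsilon_{\tilde z} \;=\; |u/\varepsilon|_z \;=\; \frac{1}{\varepsilon}\,|u|_z .
\end{equation}
Since the bijection $u\leftrightarrow\tilde u=u/\varepsilon$ preserves the relation between controls and horizontal velocities, taking the infimum in the definition \eqref{eq:def_nilp_norm} (applied to both structures) gives pointwise
\begin{equation}
    \|\dot{\tilde\gamma}(t)\|^\varepsilon_{\tilde\gamma(t)} \;=\; \frac{1}{\varepsilon}\,\|\dot\gamma(t)\|_{\gamma(t)} \qquad\text{for a.e. }t\in[0,1].
\end{equation}
Integrating yields $\ell^\varepsilon(\tilde\gamma)=\frac{1}{\varepsilon}\ell(\gamma)$.

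Finally, since $\gamma$ joins $x$ to $y$ in $V$ if and only if $\tilde\gamma$ joins $\delta_{1/\varepsilon}(x)$ to $\delta_{1/\varepsilon}(y)$ in $\delta_{1/\varepsilon}(V)$, taking the infimum over admissible curves on both sides of the length identity produces \eqref{eq:scaling}. I expect no real obstacle here: the only subtle point is the bookkeeping of pushforwards of vector fields under $\delta_{1/\varepsilon}$, ensuring that the correspondence of admissible controls is genuinely bijective (and not merely a one-way inclusion), so that taking the infimum over minimal controls on both sides is justified.
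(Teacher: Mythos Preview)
Your argument is correct and is essentially the same as the paper's: both exploit the homogeneity of the $Y_i^\varepsilon$ and of the norms $\normdotc^\varepsilon$ under the dilations $\delta_\rho$ to identify lengths (hence distances) up to the factor $1/\varepsilon$. The paper packages this as the two-parameter relation $\delta_\rho^*\di^\varepsilon=\rho\,\di^{\rho\varepsilon}$ (from $\rho(\delta_{1/\rho})_*Y_i^\varepsilon=Y_i^{\rho\varepsilon}$ and $|u|^\varepsilon_{\delta_\rho(z)}=|u|_{\delta_{\rho\varepsilon}(z)}$) and then sets $\rho=1/\varepsilon$, whereas you unfold the same computation directly at the level of admissible curves and minimal controls for that specific choice of $\rho$.
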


\begin{proof}
    By definition of $Y_i^\varepsilon$, cf.\ \eqref{eq:nilpot_approx}, we have the following: 
    \begin{equation}
    \label{eq:hom_Yi}
        \rho(\delta_{\frac1\rho})_*Y_i^\varepsilon=Y_i^{\rho\varepsilon},\qquad\forall\,\varepsilon,\rho>0.
    \end{equation}    
    In addition, by our definition of norm, for every $z\in \delta_{1/\varepsilon}(V)$ and $u\in\R^{2k}$, we have
    \begin{equation}
    \label{eq:hom_norm}
        |u|^\varepsilon_{\delta_\rho(z)} =|(u_1,\dots,u_k)|_{\delta_{\rho\varepsilon}(z)}.
    \end{equation}
    From \eqref{eq:hom_Yi} and \eqref{eq:hom_norm}, we immediately deduce that, for every $\varepsilon,\rho>0$: 
    \begin{equation}
    \delta_\rho^*\di^\varepsilon=\rho\,\di^{\rho\varepsilon}, \qquad \text{on }\delta_{\frac1{\rho\varepsilon}}(V)\times\delta_{\frac1{\rho\varepsilon}}(V).
    \end{equation}
    The claim follows choosing $\rho=1/\varepsilon$.
\end{proof}

The next result is crucial in characterizing the metric tangent of a \sF manifold. We refer to \cite[Thm.\ 1.4(iv)]{MR4645068} for details. (See also \cite[Thm.\ C.2]{MR4645068} for a simplified proof in the case of smooth structures.)

\begin{theorem}\label{thm:convergencetoNA}
    The family of distances $\{\di^\varepsilon\}_{\varepsilon>0}$ converges uniformly on compact sets of $\R^n \times \R^n$ to the distance $\widehat \di^q$, as $\varepsilon\to 0$.
\end{theorem}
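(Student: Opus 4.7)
The plan is to prove local uniform convergence $\di^\varepsilon \to \widehat\di^q$ via a standard Arzelà--Ascoli scheme, exploiting (i) the $C^\infty_{loc}$-convergence $Y_i^\varepsilon \to \widehat X_i^q$ from \eqref{eq:convVF}; (ii) the continuous dependence $q' \mapsto \normdotc_{q'}$, which yields $|u|^\varepsilon_z = |u|_{\delta_\varepsilon(z)} \to |u|_q$ uniformly in $(u,z)$ on compact sets; and (iii) a uniform Ball--Box control for the rescaled structures to secure compactness of curves.

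\textbf{Step 1 (Equicontinuity).} Fix a compact set $K\subset \R^n$. Combining Theorem \ref{thm:ball-box} for $(M,\di)$ with the scaling relation \eqref{eq:scaling} of Lemma \ref{lem:scaling}, I would derive a Ball--Box-type estimate for $\di^\varepsilon$ that is uniform in $\varepsilon\in(0,1]$ on $K$; explicitly, one obtains a constant $C=C(K)$ and a Hölder-type bound $\di^\varepsilon(x,y) \leq C|x-y|^{1/s(q)}$ for all $x,y\in K$. This shows the family $\{\di^\varepsilon\}_{\varepsilon>0}$ is equicontinuous and uniformly bounded on $K\times K$, so by Arzelà--Ascoli every sequence $\varepsilon_n\to 0$ admits a subsequence converging uniformly on compact subsets of $\R^n\times\R^n$ to some continuous pseudo-distance $\bar{\di}$.

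\textbf{Step 2 (Upper bound: $\limsup_{\varepsilon\to 0}\di^\varepsilon \leq \widehat\di^q$).} Fix $x,y\in\R^n$ and $\eta>0$. Pick an admissible curve $\widehat\gamma\colon[0,1]\to\R^n$ for $(\widehat\xi^q,\normdotc_q)$ from $x$ to $y$ with constant-speed parametrization and minimal control $\widehat u\in L^\infty([0,1];\R^k)$ such that $\ell(\widehat\gamma)\leq \widehat\di^q(x,y)+\eta$. I would then solve the ODE $\dot\gamma^\varepsilon(t)=\sum_i \widehat u_i(t)\,Y_i^\varepsilon(\gamma^\varepsilon(t))$ with $\gamma^\varepsilon(0)=x$. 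Since $Y_i^\varepsilon\to\widehat X_i^q$ in $C^\infty_{loc}$ and $\widehat u$ is bounded, standard continuous dependence gives $\gamma^\varepsilon\to\widehat\gamma$ uniformly on $[0,1]$; in particular $\gamma^\varepsilon(1)\to y$. Its $\di^\varepsilon$-length is $\int_0^1|\widehat u(t)|^\varepsilon_{\gamma^\varepsilon(t)}\,\de t$ which, by uniform convergence of the norms, tends to $\ell(\widehat\gamma)$. Finally, I would close the small gap from $\gamma^\varepsilon(1)$ to $y$ by invoking Step 1's Ball--Box estimate, producing admissible curves for $\di^\varepsilon$ whose lengths satisfy $\limsup \di^\varepsilon(x,y)\leq \widehat\di^q(x,y)+\eta$; send $\eta\to 0$.

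\textbf{Step 3 (Lower bound: $\liminf_{\varepsilon\to 0}\di^\varepsilon \geq \widehat\di^q$).} For each $\varepsilon>0$, choose a curve $\gamma^\varepsilon\colon[0,1]\to\R^n$ from $x$ to $y$ that is $\varepsilon$-quasi-optimal for $\di^\varepsilon$, parametrized proportionally to $\di^\varepsilon$-arclength; by Step 2, the minimal controls $u^\varepsilon$ are uniformly bounded in $L^\infty$. The $C^\infty_{loc}$-convergence of the $Y_i^\varepsilon$ to $\widehat X_i^q$ (locally Lipschitz with uniform constant) together with Grönwall shows that the $\gamma^\varepsilon$ remain in a fixed compact set and are equi-Lipschitz in the Euclidean metric; Arzelà--Ascoli provides $\gamma^\varepsilon\to\widehat\gamma$ uniformly, and $u^\varepsilon \overset{*}{\weakto} \widehat u$ weakly-$*$ in $L^\infty$ along a subsequence. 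Passing to the limit in the ODE, $\widehat\gamma$ is admissible for $\widehat X^q$ with control $\widehat u$. The key concluding step is the lower semicontinuity
\begin{equation}
\ell_{\widehat\di^q}(\widehat\gamma)=\int_0^1|\widehat u(t)|_q\,\de t \leq \liminf_{\varepsilon\to 0}\int_0^1|u^\varepsilon(t)|^\varepsilon_{\gamma^\varepsilon(t)}\,\de t = \liminf_{\varepsilon\to 0}\ell_{\di^\varepsilon}(\gamma^\varepsilon),
\end{equation}
which follows by combining weak-$*$ lower semicontinuity of the convex length functional with the uniform convergence $|\cdot|^\varepsilon_{\gamma^\varepsilon(t)}\to|\cdot|_q$ on the image of the curves.

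\textbf{Main obstacle.} The delicate point is Step 3: one must simultaneously pass to the limit in two ways — the base point of the norm moves (with $\varepsilon$ and $t$) and the control converges only weakly-$*$. This forces a careful combination of the uniform continuity of $(z,u)\mapsto |u|_{\delta_\varepsilon(z)}$ on compact sets and convex lower semicontinuity. The uniform Ball--Box estimate of Step 1 is essential throughout to confine the curves $\gamma^\varepsilon$ to a fixed compact set where the $C^\infty_{loc}$-convergence of the vector fields can be used. Once pointwise convergence is established, equicontinuity from Step 1 upgrades it to locally uniform convergence, identifying the limit in Step 1 with $\widehat\di^q$.
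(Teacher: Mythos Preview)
Your approach is \emph{genuinely different} from the paper's. The paper does not prove the theorem directly: it invokes \cite[Thm.~1.4(iv)]{MR4645068} as a black box and simply verifies its three hypotheses (local uniform convergence of the $Y_i^\varepsilon$ to $\widehat X_i^q$, local uniform convergence of the norms $\normdotc^\varepsilon\to\normdotc_q$, and completeness of the limit structure). What you outline is essentially a self-contained proof of that cited convergence theorem, via Arzel\`a--Ascoli together with upper/lower semicontinuity of lengths. This buys you a proof that does not depend on external machinery and makes the mechanism transparent; the paper's route is shorter and more modular. Both are legitimate, and the paper itself points to \cite[Thm.~C.2]{MR4645068} for a direct argument of this type in the smooth case.

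One point in your plan deserves more care. In Step~1 you claim the uniform H\"older bound $\di^\varepsilon(x,y)\leq C|x-y|^{1/s(q)}$ on $K$ follows from the Ball--Box Theorem~\ref{thm:ball-box} combined with Lemma~\ref{lem:scaling}. But Theorem~\ref{thm:ball-box} is stated \emph{only at the base point} $q$; via scaling this yields a Ball--Box for $\di^\varepsilon$ centered at $0$ (hence uniform boundedness on $K\times K$ by the triangle inequality), but not directly equicontinuity at pairs $(x,y)$ away from the origin. If $q$ is regular you can appeal to the uniform Ball--Box of Remark~\ref{rmk:uniform_ball-box} and then scale; in general, though, the weights at nearby points may differ from those at $q$, and the anisotropic H\"older estimate centered at $q$ fails to transfer. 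The robust way to obtain equicontinuity is to use what you list as ingredient~(i): since $Y_i^\varepsilon\to\widehat X_i^q$ in $C^\infty_{\mathrm{loc}}$ and the $\widehat X_i^q$ satisfy H\"ormander at step $s(q)$ \emph{at every point} of $\R^n$ (by homogeneity, cf.~Remark~\ref{rmk:dilation_inv}), a quantitative Chow--Rashevskii/perturbation argument gives the uniform H\"older bound for $\di^\varepsilon$ on compacta once $\varepsilon$ is small. Your Steps~2 and~3 are fine; the lower semicontinuity in Step~3 goes through exactly as you say, after noting that the weak-$*$ limit $\widehat u$ is merely \emph{a} control for $\widehat\gamma$ (not necessarily minimal), so one should write $\widehat\di^q(x,y)\leq\ell_{\widehat\di^q}(\widehat\gamma)\leq\int_0^1|\widehat u|_q\,\de t$.
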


\begin{proof}
    We check the hypothesis of \cite[Thm.\ 1.4(iv)]{MR4645068}. Firstly, the vector field $Y_i^\varepsilon$ converges to $\widehat X_i^q$ locally uniformly on $\R^n$, for every $i$, as $\varepsilon\to 0$, cf.\ \eqref{eq:convVF}. Secondly, the norm $\normdotc^\varepsilon$ converges to $\normdotc_q$ locally uniformly as $\varepsilon\to 0$ (as functions on $\R^n\times\R^k$). Thirdly, the limit \sF structure $(\widehat\xi^q,\normdotc_q)$ defines a complete distance on $\R^n$. 
\end{proof}

\subsection{Bounded measures on a sub-Finsler manifold}

In this section, we describe a class of measures whose blow-up is the Lebesgue measure almost-everywhere, up to a constant. 

\begin{definition}[Bounded measures]
\label{def:bounded_measure}
    Let $M$ be a $n$-dimensional \sF manifold, equipped with a Borel measure $\m$. We say that $\m$ is a \emph{bounded measure} if, for every coordinate chart $\psi:U\to \R^n$, $\psi_\#\m\ll\Leb^n$ and 
    \begin{equation}
    \label{eq:boundedness}
        \psi_\#\m=\rho\Leb^n,\qquad\text{with}\qquad c\leq \rho\leq C\quad\Leb^n\text{-a.e.},
    \end{equation}
    for some positive constants $C\geq c>0$, depending on the chart.
\end{definition}

\begin{lemma}
\label{lem:density}
    Let $M$ be a complete $n$-dimensional sub-Finsler manifold, equipped with a bound\-ed measure $\m$. Then, for every $q\in M$, there exists $C_q\geq 1$ and $r_q>0$ such that
    \begin{equation}
         C_q^{-1}r^{\mathcal Q(q)}\leq \m(B(q,r))\leq C_q r^{\mathcal Q(q)},\qquad\forall\,q\in M,\ r\in (0,r_q),
    \end{equation} 
    where $\mathcal Q(q)$ is the homogeneous dimension of $M$ at $q$. 
\end{lemma}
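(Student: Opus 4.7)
The plan is to combine the Ball-Box theorem (Theorem \ref{thm:ball-box}) with the boundedness of $\m$ from Definition \ref{def:bounded_measure}, exploiting the fact that the Lebesgue volume of a privileged box scales exactly as $r^{\mathcal Q(q)}$.

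First, I would fix a chart of privileged coordinates $\widehat\psi^q:U\to\R^n$ at $q$, with $U$ relatively compact. By Definition \ref{def:bounded_measure}, in this chart we can write $\widehat\psi^q_\#(\m\mres U)=\rho\,\Leb^n$ with $0<c\leq \rho \leq C$ almost everywhere, for constants $c,C$ depending on the chart. Next, I would invoke Theorem \ref{thm:ball-box} to obtain constants $\widetilde C_q\geq 1$ and $\widetilde r_q>0$ (shrinking the latter so that also $B(q,r)\subset U$ for $r<\widetilde r_q$) such that
\begin{equation}
\text{Box}^q(r/\widetilde C_q)\subset \widehat\psi^q(B(q,r))\subset \text{Box}^q(\widetilde C_q\,r),\qquad \forall\,r\in(0,\widetilde r_q).
\end{equation}

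The core computation is the Lebesgue volume of a privileged box, which factors across the coordinates according to the weights:
\begin{equation}
\Leb^n(\text{Box}^q(r))=\prod_{i=1}^n 2r^{w_i(q)}=2^n r^{\sum_{i=1}^n w_i(q)}=2^n r^{\mathcal Q(q)},
\end{equation}
where the last identity is precisely \eqref{eq:homogoeneous_dim}. Writing $\m(B(q,r))=\int_{\widehat\psi^q(B(q,r))}\rho\,\de\Leb^n$ and combining the Ball-Box inclusions with the pointwise bounds on $\rho$, I would then get
\begin{equation}
\frac{c\,2^n}{\widetilde C_q^{\mathcal Q(q)}}\,r^{\mathcal Q(q)} \;\leq\; \m(B(q,r)) \;\leq\; C\,2^n\, \widetilde C_q^{\mathcal Q(q)}\,r^{\mathcal Q(q)},
\end{equation}
from which the claim follows upon setting $r_q:=\widetilde r_q$ and choosing $C_q$ as the maximum of the two prefactors.

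I do not foresee any real obstacle here: once the Ball-Box theorem is in hand, the proof reduces to inserting the box inclusions into the density integral and computing an elementary product, with the exponent $\mathcal Q(q)$ emerging automatically as the sum of the weights. Note that the statement is pointwise in $q$ and permits the constants $C_q, r_q$ to depend on the base point; a uniform-on-compact-sets version in the equiregular case would follow by the same argument together with Remark \ref{rmk:uniform_ball-box}, and in particular yields local Ahlfors $\mathcal Q(q)$-regularity of $\m$, consistent with $\mathcal Q(q)$ being the Hausdorff dimension of $(M,\di)$ at regular points.
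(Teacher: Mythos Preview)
Your proof is correct and follows essentially the same approach as the paper: fix privileged coordinates, use the density bounds on $\rho$ to sandwich $\m(B(q,r))$ between constant multiples of $\Leb^n(\widehat\psi^q(B(q,r)))$, and then apply the Ball-Box theorem together with the identity $\Leb^n(\text{Box}^q(r))=2^n r^{\mathcal Q(q)}$ to extract the exponent. The only cosmetic point is that when you ``choose $C_q$ as the maximum of the two prefactors'' you of course mean the maximum of the upper prefactor and the reciprocal of the lower one (and $\geq 1$).
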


\begin{proof}
    Let $q\in M$ and let $\widehat\psi^q:U\to \R^n$ be a set of privileged coordinates centered at $q$ and consider $\widehat\psi^q_\#\m$. On the one hand, by assumptions, there exists a measurable function $\rho:\R^n\to \R_+$ satisfying \eqref{eq:boundedness}. Then, we easily see that, for $r>0$ sufficiently small,
    \begin{equation}
    \label{eq:previousprevious_inequality}
       c\, (\widehat\psi^q)_\#^{-1}\Leb^n(B(q,r)) \leq \m(B(q,r))\leq C\,(\widehat\psi^q)_\#^{-1}\Leb^n(B(q,r)). 
    \end{equation}
    On the other hand, from the Ball-Box theorem, cf.\ Theorem \ref{thm:ball-box}, we deduce that there exist constants $\tilde C_q$ and $r_q\in (0,\diam U)$, such that
    \begin{equation}
        \tilde C_q^{-1} r^{\mathcal Q(q)}\leq (\widehat\psi^q)^{-1}_\#\Leb^n(B(q,r))\leq \tilde C_qr^{\mathcal Q(q)},\qquad\forall\,r\leq r_q.
    \end{equation}
    The combination of the previous inequality and \eqref{eq:previousprevious_inequality} concludes the proof.
\end{proof}

\begin{definition}[Lebesgue points for $\m$]
\label{def:lebesgue_points}
    Let $M$ be a \sF manifold, equipped with a bounded measure $\m$. Then, we say that $q\in M$ is a \emph{Lebesgue point for} $\m$ if there exists a coordinate chart $\psi:U\to\R^n$, defined on a neighborhood $U$ of $q$, such that
    \begin{equation}
    \label{eq:limit_density}
        \lim_{r\to 0}\frac{1}{r^{\mathcal Q(q)}}\int_{\psi(B(q,r))}|\rho-\rho(\psi(q))|\de\Leb^n=0,
    \end{equation}
    where $\psi_\#\m=\rho\Leb^n$ and $\mathcal Q(q)$ is the homogeneous dimension of $M$ at $q$. 
\end{definition}

\begin{lemma}
\label{lem:boh}
    Let $M$ be a complete \sF manifold, equipped with a bounded measure $\m$. Then, the definition of Lebesgue point for $\m$ is independent on the choice of $\psi$ and $\m$-a.e.\ point of $M$ is a Lebesgue point for $\m$, with strictly positive density.
\end{lemma}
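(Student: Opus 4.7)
The plan is to first establish invariance of the Lebesgue-point condition under change of chart via a direct change-of-variables computation, and then produce Lebesgue points by applying the classical Lebesgue differentiation theorem in a doubling metric measure space inherited from a privileged chart.

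For chart independence, I would fix two charts $\psi_1, \psi_2$ around $q$ with transition diffeomorphism $\Phi := \psi_2 \circ \psi_1^{-1}$, and set $z_i := \psi_i(q)$. The change-of-variables formula yields $\rho_2(y) = \rho_1(\Phi^{-1}(y)) |\det D\Phi^{-1}(y)|$, whence by the triangle inequality
\begin{equation*}
|\rho_2(y) - \rho_2(z_2)| \le |\rho_1(\Phi^{-1}(y)) - \rho_1(z_1)| \, |\det D\Phi^{-1}(y)| + \rho_1(z_1) \bigl| |\det D\Phi^{-1}(y)| - |\det D\Phi^{-1}(z_2)| \bigr|.
\end{equation*}
Integrating over $\psi_2(B(q,r))$ and substituting $y = \Phi(x)$ in the first summand recovers exactly the integral defining the Lebesgue-point condition for $\psi_1$, which vanishes by hypothesis after normalization by $r^{\mathcal Q(q)}$. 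For the second summand, Theorem \ref{thm:ball-box} together with $w_1(q) = 1$ forces $\diam \psi_j(B(q, r)) \lesssim r$, so the smoothness of $\Phi$ (hence Lipschitz regularity of $|\det D\Phi^{-1}|$ near $z_2$) makes the integrand $O(r)$; combined with $\Leb^n(\psi_2(B(q, r))) \le c^{-1} \m(B(q, r)) \lesssim r^{\mathcal Q(q)}$ from Lemma \ref{lem:density} and the lower bound $\rho_2 \ge c$, this contribution is $O(r^{\mathcal Q(q)+1})$, negligible after dividing by $r^{\mathcal Q(q)}$. Symmetry in $\psi_1, \psi_2$ gives the full equivalence.

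For existence, I would work in a privileged chart $\widehat\psi^{q_0}$ around a regular point $q_0$, where by Remark \ref{rmk:uniform_ball-box} (applied on a small neighborhood where the structure is equiregular) and Lemma \ref{lem:density} one has the uniform estimate $\Leb^n(\widehat\psi^{q_0}(B(q, r))) \sim r^{\mathcal Q(q_0)}$ for $q$ in a compact neighborhood. Hence the restriction of $(\R^n, \di, \Leb^n)$ to the chart is a locally doubling metric measure space. The classical Lebesgue differentiation theorem for doubling spaces applied to the bounded density $\rho$ yields, for $\Leb^n$-a.e.\ $z$,
\begin{equation*}
\lim_{r \to 0} \frac{1}{\Leb^n(B^\di(z, r))} \int_{B^\di(z, r)} |\rho - \rho(z)| \,\de\Leb^n = 0,
\end{equation*}
which is equivalent to \eqref{eq:limit_density} via the uniform box estimate; since $\m$ is equivalent to $\Leb^n$ in the chart, $\m$-a.e.\ point is Lebesgue. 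Strict positivity of $\rho(\psi(q))$ at such points is immediate since $\rho \ge c > 0$ and at a Lebesgue point $\rho$ coincides with its averaging limit. Covering $M$ by countably many such regular neighborhoods finishes the argument.

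The main technical obstacle is establishing the doubling property uniformly near each base point, which rests on the uniform Ball-Box theorem—available on neighborhoods where the structure is locally equiregular, i.e.\ near regular points. Singular points form a closed nowhere-dense subset, and one must separately argue that they are $\m$-negligible (for instance, via the absolute continuity of $\m$ with respect to $\Leb^n$ in charts combined with the fact that the singular locus has empty interior and, in the relevant classes of smooth structures, vanishing Lebesgue measure).
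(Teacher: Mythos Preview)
Your chart-independence argument is essentially the same as the paper's (and in fact more carefully written, since you track the Jacobian factor explicitly and split via the triangle inequality; the paper's computation is terser). No issue there.

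The genuine gap is in the existence part. You obtain doubling only on neighborhoods of \emph{regular} points, via the uniform Ball--Box theorem, and then attempt to dispose of the singular set by claiming it is $\m$-negligible. Your justification --- that the singular locus is closed with empty interior, combined with absolute continuity of $\m$ with respect to $\Leb^n$ --- does not work: nowhere-dense sets can have positive Lebesgue measure (think of a fat Cantor set), and for merely smooth \sF structures there is no general theorem forcing the singular set to be $\Leb^n$-null. So as written, your argument does not cover $\m$-a.e.\ point.

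The paper sidesteps this completely. Rather than seeking a \emph{uniform} doubling constant on a neighborhood, it proves \emph{asymptotic} doubling at every single point $q$ (singular ones included): the pointwise Ball--Box theorem (Theorem~\ref{thm:ball-box}), which needs no regularity of $q$, already gives $\Leb^n(\widehat\psi^q(B(q,r)))\asymp_q r^{\mathcal Q(q)}$, and composing with the smooth transition $\psi\circ(\widehat\psi^q)^{-1}$ transfers this to an arbitrary chart. Asymptotic doubling of $\Leb^n$ on $(\psi(U),(\psi^{-1})^*\di)$ is enough to run the Lebesgue differentiation theorem in the form of \cite[Sec.~3.4]{MR3363168}, yielding the Lebesgue-point property $\Leb^n$-a.e.\ (hence $\m$-a.e.) with no case distinction between regular and singular points. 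If you replace your uniform-doubling step by this pointwise asymptotic-doubling argument, your proof goes through.
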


\begin{proof}
    Let $\psi:U\subset M\to \R^n$ be a coordinate chart, with $U$ relatively compact. First of all, we show that $\Leb^n$  satisfies the asymptotic doubling property on $(\psi(U),(\psi^{-1})^*\di)$, namely 
    \begin{equation}
    \label{eq:asymp_doubling}
        \limsup_{r\to 0}\frac{\Leb^n(\psi(B(q,2r)))}{\Leb^n(\psi(B(q,r)))}<\infty, 
    \end{equation}
    for all $q\in U$. Indeed, by the Ball-Box theorem, cf.\ Theorem \ref{thm:ball-box}, there exist constants $C_q\geq 1$ and $r_q>0$ such that 
    \begin{equation}
    \label{eq:comparison_measure}
        \frac{1}{C_q}r^{\mathcal Q(q)}\leq \Leb^n(\widehat\psi^q(B(q,r)))\leq C_q r^{\mathcal Q(q)},\qquad\forall r<r_q,
    \end{equation}
    where $\widehat\psi^q:V_q\subset U\to \R^n$ is a chart of privileged coordinates at $q$. Thus, for every $r< r_q$, we have: 
    \begin{align}
        \Leb^n(\psi(B(q,2r)))&=\int_{\widehat\psi^q(B(q,2r))}|\det d(\psi\circ (\widehat\psi^q)^{-1})|\de\Leb^n\\ 
        &\leq \sup_{\widehat \psi^q(V_q)}|\det d(\psi\circ (\widehat\psi^q)^{-1})|\,\Leb^n(\widehat\psi^q(B(q,2r)))\\
        &\leq \sup_{\widehat\psi^q(V_q)}|\det d(\psi\circ (\widehat\psi^q)^{-1})|\,  C_q(2r) ^{\mathcal Q(q)} \\
        &\leq \sup_{\widehat\psi^q(V_q)}|\det d(\psi\circ (\widehat\psi^q)^{-1})|\, C_q^2 2^{\mathcal Q(q)}\Leb^n(\widehat\psi^q(B(q,r)))\\
        &\leq \sup_{\widehat\psi^q(V_q)}|\det d(\psi\circ (\widehat\psi^q)^{-1})|\,\sup_{\psi(U)}|\det d(\widehat\psi^q\circ \psi^{-1})|\, C_q^2 2^{\mathcal Q(q)}\Leb^n(\psi(B(q,r))).
    \end{align}
    This shows that condition \eqref{eq:asymp_doubling} is fulfilled.
    Therefore, Lebesgue differentiation theorem holds, cf. \cite[Sec.\ 3.4]{MR3363168}, and we deduce that, for $\Leb^n$-a.e. $x\in\psi(U)$, we have
    \begin{equation}
        \lim_{r\to 0}\frac{1}{\Leb^n(\psi(B(p,r)))}\int_{\psi(B(p,r))}|\rho-\rho(x)|\de\Leb^n=\lim_{r\to 0}\frac{1}{r^{\mathcal Q(p)}}\int_{\psi(B(p,r))}|\rho-\rho(x)|\de\Leb^n=0,
    \end{equation}
    having set $p:=\psi^{-1}(x)$. From this, we immediately see that, if the limit \eqref{eq:limit_density} is independent on the coordinate chart, $\m$-a.e.\ point of $M$ is a Lebesgue point for $\m$. 
    
    Second of all, let $\varphi:V\to\R^n$ be another coordinate chart containing $q$. Then, $\varphi_\#\m=\eta\Leb^n$ and we need to check that \eqref{eq:limit_density} implies that
    \begin{equation}
    \label{eq:limit_density_claim}
        \lim_{r\to 0}\frac{1}{r^{\mathcal Q(q)}}\int_{\varphi(B(q,r))}|\eta-\eta(\varphi(q))|\de\Leb^n=0.
    \end{equation}
    Consider the smooth change of coordinates given by $\psi\circ\varphi^{-1}:\R^n\to\R^n$ and observe that, by definition, we have
    \begin{equation}
        (\psi\circ\varphi^{-1})_\#(\eta\Leb^n)=\rho\Leb^n,
    \end{equation}
    which implies that $\eta=\rho\circ\psi\circ\varphi^{-1}$, $\Leb^n$-a.e.. Hence, using the change of variables $x\mapsto \psi\circ\varphi^{-1}(x)$ in \eqref{eq:limit_density_claim}, we obtain: 
    \begin{equation}
        \frac{1}{r^{\mathcal Q(q)}}\int_{\varphi(B(q,r))}|\eta-\rho(\psi(q))|\de\Leb^n=\frac{1}{r^{\mathcal Q(q)}}\int_{\psi(B(q,r))}|\rho-\rho(\psi(q))|\cdot|\det d(\varphi\circ\psi^{-1})|\de\Leb^n.
    \end{equation}
    Since $\varphi\circ\psi^{-1}$ is smooth, we easily see that the right-hand side converges to $0$, as a consequence of \eqref{eq:limit_density}. Recalling that $\eta=\rho\circ\psi\circ\varphi^{-1}$, $\Leb^n$-a.e., we conclude the proof of the claim, possibly choosing a different representative of $\eta$.  
\end{proof}

When $M$ is an equiregular \sF manifold, Lemma \ref{lem:density} can be improved to a complete characterization of \sF bounded measures, in terms of the notion of $\mathcal Q$-Ahlfors regularity.

\begin{definition}
\label{def:ahlfors}
    Let $(\X,\di,\m)$ be a metric measure space and let $\mathcal Q>1$. We say that $\m$ is \emph{locally $\mathcal Q$-Ahlfors regular} if, for every compact $K\subset \X$, there exists $C\geq 1$ and $r_0\in (0,\diam\X]$, depending on $K$, such that
    \begin{equation}
         C^{-1}r^{\mathcal Q}\leq \m(B(p,r))\leq C r^{\mathcal Q},\qquad\forall\,p\in K,\ r\in (0,r_0).
    \end{equation}
\end{definition}

\begin{corollary}
\label{cor:density}
    Let $M$ be a complete equiregular \sF manifold of homogeneous dimension $\mathcal Q$. Then, a measure $\m$ on $M$ is bounded if and only if it is locally $\mathcal Q$-Ahlfors regular. 
\end{corollary}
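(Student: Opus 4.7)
The plan is to prove the two implications separately. The forward direction ($\m$ bounded $\Rightarrow$ $\m$ locally $\mathcal Q$-Ahlfors regular) should follow almost immediately from Lemma \ref{lem:density} once the constants are made uniform on compact sets. The reverse direction is more delicate and requires a differentiation argument.

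For the forward direction, I would retrace the proof of Lemma \ref{lem:density}. The estimate on $\m(B(q,r))$ was obtained by combining two ingredients: the uniform bound $c\le\rho\le C$ on the density of $\m$ in any chart, and the Ball-Box estimate on $\Leb^n(\widehat\psi^q(B(q,r)))$. In the equiregular case, Remark \ref{rmk:uniform_ball-box} provides Ball-Box constants that are uniform on compact sets, and the homogeneous dimension $\mathcal Q(q)\equiv\mathcal Q$ is constant throughout $M$. Covering any compact $K\subset M$ with finitely many privileged coordinate charts then yields uniform constants $C\ge 1$ and $r_0>0$, which is exactly local $\mathcal Q$-Ahlfors regularity.

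For the reverse direction, I would fix a coordinate chart $\psi:U\to\R^n$ and a compact $K\subset U$, and aim to show that $\psi_\#\m=\rho\Leb^n$ with $c\le\rho\le C$ on $\psi(K)$. By the Ahlfors regularity assumption, $\m(B(q,r))\sim r^{\mathcal Q}$ uniformly for $q\in K$ and small $r$. On the other hand, by combining the uniform Ball-Box theorem applied to a privileged chart $\widehat\psi^q$ with the fact that the Jacobian of the transition $\psi\circ(\widehat\psi^q)^{-1}$ is bounded above and below on compact sets (uniformly in the base point $q\in K$, since privileged charts can be chosen smoothly in $q$), I would deduce $\Leb^n(\psi(B(q,r)))\sim r^{\mathcal Q}$ uniformly for $q\in K$ and small $r$. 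Consequently, there exist constants $c',C'>0$ such that
\begin{equation}
    c'\le \frac{\psi_\#\m(\psi(B(q,r)))}{\Leb^n(\psi(B(q,r)))}\le C'
\end{equation}
uniformly for $q\in K$ and small $r$. To pass from this comparison on balls to a statement on densities, I would invoke a Lebesgue-type differentiation theorem. The sub-Finsler balls $\psi(B(q,r))$ are comparable to the Euclidean boxes $\text{Box}^q(r)$ via the Ball-Box theorem, and boxes form a legitimate Vitali-type differentiation basis for $\Leb^n$ (this is the same mechanism used in the proof of Lemma \ref{lem:boh} to establish asymptotic doubling). Therefore, $\psi_\#\m\ll\Leb^n$ and the Radon-Nikodym density at $\Leb^n$-a.e.\ $x$ equals the limit of the above ratio, which lies in $[c',C']$.

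The main obstacle will be the second direction, and within it, the clean application of differentiation with respect to sub-Finsler balls. Two points require care: first, the uniformity on compact sets of the Jacobian bounds for the transition between $\psi$ and privileged charts $\widehat\psi^q$, which relies crucially on equiregularity so that privileged charts can be arranged to depend smoothly on the base point; and second, verifying that sub-Finsler balls indeed provide a differentiation basis for $\Leb^n$ in coordinates, which reduces via Ball-Box to the classical fact for anisotropic Euclidean boxes.
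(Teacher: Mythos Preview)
Your proposal is correct and uses essentially the same ingredients as the paper. The forward direction is identical. For the reverse direction, the paper proceeds slightly differently: rather than invoking a Lebesgue differentiation theorem to read off the density, it applies Vitali's covering theorem directly. From the asymptotic doubling of $\nu:=\Leb^n$ (established in the proof of Lemma~\ref{lem:boh}) and the pointwise bound $\overline D_\nu\mu\le C$ (obtained exactly as you describe, via uniform Ball-Box and Ahlfors regularity), Vitali yields $\mu(A)\le C\,\nu(A)$ for all Borel $A$, hence $\mu\ll\nu$ with density at most $C$. For the lower bound the paper swaps the roles of $\mu$ and $\nu$, using that $\mu=\psi_\#\m$ is itself asymptotically doubling thanks to Ahlfors regularity. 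Your single differentiation argument collapses these two steps into one, which is a legitimate alternative; just be explicit that absolute continuity does not follow from the ratio bound alone but from the full differentiation theorem for asymptotically doubling measures (the singular part lives where the upper density is $+\infty$, which your bound excludes).

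Your flagged technical point about uniform Jacobian bounds for $\psi\circ(\widehat\psi^q)^{-1}$ is real and is implicitly used by the paper as well; it is handled by the uniform Ball-Box statement of Remark~\ref{rmk:uniform_ball-box}, which in the equiregular case gives the needed uniformity on compact sets without having to appeal separately to smooth dependence of privileged charts.
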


\begin{proof}
    The only if part of the statement is the content of Lemma \ref{lem:density}, taking into account that all the constants can be chosen locally uniformly since $M$ is equiregular. To prove the converse statement, let $\psi:U\subset M\to\R^n$ be a chart and define $\mu:=\psi_\#\m$ and $\nu:=\restr{\Leb^n}{\psi(U)}$. On the one hand, from the proof of Lemma \ref{lem:boh}, we deduce that $\nu$ is asymptotically doubling and thus $(\psi(U),(\psi^{-1})^*\di,\nu)$ satisfies Vitali's covering lemma, cf.\ \cite[Thm.\ 3.4.3]{MR3363168}. On the other hand, from the upper bound of the uniform Ball-Box theorem, cf.\ Remark \ref{rmk:uniform_ball-box}, we obtain the estimate: 
    \begin{equation}
    \label{eq:theestimate}
        \overline D_\nu\mu(p):=\limsup_{r\to 0}\frac{\mu(\psi(B(p,r)))}{\nu(\psi(B(p,r)))}\leq C,\qquad\forall\,p\in U,
    \end{equation}
    for some positive constant $C$ depending only on $U$. From \eqref{eq:theestimate}, a classical application of Vitali covering's theorem ensures that for every open set $A\subset \psi(U)$, $\mu(A)\leq C\nu(A)$. Finally, the monotone class theorem allows to deduce that 
    \begin{equation}
        \mu(E)\leq C\nu(E),\qquad\forall\,E\subset\psi(U)\ \text{Borel}.
    \end{equation}
    This implies that $\mu\ll\nu$ and the density is bounded by $C$. To prove that $\nu\ll\mu$ with a bounded density we repeat the same argument, observing that also $\mu$ is asympotically doubling, thanks to the $\mathcal Q$-Ahlfors regularity, and using the lower bound of the uniform Ball-Box theorem.
\end{proof}

\begin{remark}
\label{rmk:mini_flex}
    On an equiregular \sF manifold, the $\mathcal Q$-Hausdorff measure is locally $\mathcal Q$-Ahlfors regular, as a consequence of the (uniform) Ball-Box theorem. Therefore, it is bounded in the sense of Definition \ref{def:bounded_measure}. 
\end{remark}

\subsection{Convergence to the metric measure tangent}

In this section, we prove Theorem \ref{thm:tangente}. The metric side is the content of \cite[Thm.\ 1.5]{MR4645068}. We include a proof for completeness. The blow-up of the measure has been studied in \cite{MR3388884,SRmeasures}, only for smooth measures (in the \sr case). 

Recall that a pointed metric measure space is a metric measure space with a distinguished point. For such class of spaces, a natural notion of convergence is the pointed measured Gromov-Hausdorff convergence, see \cite{GigMonSav}. 

\begin{definition}[pmGH-convergence]
\label{def:pmGH_convergence}
     Let $(\X_n, \di_n,\m_n, x_n)$, $n\in\N\cup\{\infty\}$ be a collection of pointed metric measure spaces. Then, we say that $(\X_n, \di_n,\m_n, x_n)$ converges to $(\X_\infty, \di_\infty,\m_\infty, x_\infty)$ in the pointed measured Gromov-Hausdorff sense (pmGH for short) if for any $\varepsilon,R > 0$ there exists $N(\varepsilon,R)\in\N$ such that for all $n>N(\varepsilon,R)$ there exists a Borel map $f^{\varepsilon,R}_n : B^{\di_n}(x_n,R)\to \X_\infty$ such that
     \begin{enumerate}
         \item[(i)] $f^{\varepsilon,R}_n(x_n)=x_\infty$,
         \item[(ii)] $\sup_{x,y \in B^{\di_n}(x_n,R)} |\di_n(x,y)- \di_\infty(f^{\varepsilon,R}_n(x), f^{\varepsilon,R}_n(y))|\leq \varepsilon$,
         \item[(iii)] the $\varepsilon$-neighbourhood of $f^{\varepsilon,R}_n (B^{\di_n}(x_n,R))$ contains $B^{\di_\infty}(x_\infty,R-\varepsilon)$,
         \item[(iv)] $(f^{\varepsilon,R}_n)_\#(\restr{\m_n}{B^{\di_n}(x_n,R)})$ weakly converges to $\restr{\m}{B^{\di_\infty}(x_\infty,R)}$ as $n\to\infty$, for a.e. $R > 0$.
     \end{enumerate}
\end{definition}  

 The notion of metric measured tangents for a general metric measure space can be given according to the notion of pmGH-convergence. Let $(\X,\di,\m)$ be a metric measure space, let $x\in\supp(\m)$ and $r\in(0,1)$. Define 
\begin{equation}
\label{eq:tangent_measure}
    \m_r^x:= \left(\int_{B(x,r)}1-\frac1r\di(\cdot,x)\de\m\right)^{-1}\m.
\end{equation}

 \begin{definition}[The collection of tangent spaces ${\rm Tan}(\X,\di,\m,x)$]
    Let $(\X,\di,\m)$ be a metric measure space, let $x\in\supp(\m)$ and $r\in(0,1)$. and consider the rescaled and normalized pointed metric measure space $(\X, \di_r,\m^x_r,x)$, where $\di_r:=r^{-1}\di$ and $\m_r^x$ is defined in \eqref{eq:tangent_measure}. A pointed metric measure space $({\sf Y},\di_Y, \mathfrak{n}, y)$ is called a tangent to $(\X,\di,\m)$ at $x$ if there exists a sequence of radii $r_i\to 0^+$ so that 
    $(\X, \di_{r_i},\m^x_{r_i},x) \to ({\sf Y},\di_Y, \mathfrak{n}, y)$ as $i\to\infty$ in the pointed measured Gromov-Hausdorff topology. We denote the collection of all the tangents of $(\X,\di,\m)$ at $x$ by ${\rm Tan}(\X,\di,\m,x)$.    
 \end{definition}

We are now in position to prove Theorem \ref{thm:tangente}. 

 \begin{theorem}
 \label{thm:tangents}
     Let $M$ be an $n$-dimensional complete \sF manifold, equipped with a bounded measure $\m$. Then, for $\m$-a.e.\ $q\in M$, we have 
     \begin{equation}
         {\rm Tan}(M,\di,\m,q)=\{(\widehat M^q,\widehat\di^q,m(q)\Leb^n,0)\},
     \end{equation}
     where $(\widehat M^q,\widehat\di^q)$ is the nilpotent approximation of $M$ at $q$ and is isometric to a quotient \sF Carnot group and $m(q)>0$ is defined by 
     \begin{equation}
     \label{eq:normalization_tangent_measure}
         m(q):=\left(\int_{\hat B^q(0,1)}(1-\widehat\di^q)\de\Leb^n\right)^{-1}.
     \end{equation} 
 \end{theorem}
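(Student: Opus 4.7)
I plan to work at a Lebesgue point $q\in M$ for $\m$ with strictly positive density, which by Lemma \ref{lem:boh} accounts for $\m$-almost every point. Fix privileged coordinates $\widehat\psi^q\colon U\to \R^n$ centered at $q$, write $\widehat\psi^q_\#\m = \rho\Leb^n$ and set $\rho_0 := \rho(0) > 0$. The natural choice of approximating maps for Definition \ref{def:pmGH_convergence} is $\Phi_r := \delta_{1/r}\circ\widehat\psi^q$, defined on $B^\di(q,rR)$ for $r$ small enough that $B^\di(q,rR)\subset U$. Since $\Phi_r(q)=0$, condition (i) is immediate.

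For conditions (ii)--(iii), Lemma \ref{lem:scaling} gives the key identity
$$r^{-1}\di(x,y)=\di^r\bigl(\Phi_r(x),\Phi_r(y)\bigr), \qquad \forall\, x,y\in B^\di(q,rR),$$
and in particular $\Phi_r\bigl(B^\di(q,rR)\bigr)=B^{\di^r}(0,R)$. By Theorem \ref{thm:convergencetoNA}, $\di^r \to \widehat\di^q$ uniformly on compact sets of $\R^n\times\R^n$; the Ball-Box theorem applied to $\widehat\di^q$ (which, by Remark \ref{rmk:dilation_inv}, induces a complete metric) ensures the $\di^r$-balls of radius $R$ lie in a fixed bounded subset of $\R^n$ for $r$ small. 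Consequently $\sup|\di^r-\widehat\di^q|$ on this set vanishes as $r\to 0$, yielding the isometry-type estimate of (ii) and the covering property of (iii).

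For (iv), the change of variables $z=\delta_r(w)$ has Jacobian $r^{\mathcal{Q}(q)}$ and produces
$$(\Phi_r)_\#\bigl(\m\mres B^\di(q,rR)\bigr)=r^{\mathcal{Q}(q)}\,\rho(\delta_r(\cdot))\,\Leb^n \mres B^{\di^r}(0,R).$$
The Lebesgue point condition \eqref{eq:limit_density}, propagated through the same change of variables and combined with the Ball-Box inclusion $\delta_r(\{|w|\le K\})\subset\widehat\psi^q\bigl(B^\di(q,C_q r K)\bigr)$, shows that $\rho\circ\delta_r \to \rho_0$ in $L^1_{\mathrm{loc}}(\R^n)$ as $r\to 0$. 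Applying this to the normalization factor, writing
$$\frac{a_r}{r^{\mathcal{Q}(q)}}=\int_{\R^n}\bigl(1-\di^r(0,\cdot)\bigr)_+\,\rho(\delta_r(\cdot))\,\de\Leb^n,$$
where the integrand is supported in a fixed bounded set of $\R^n$, dominated convergence together with the uniform convergence $\di^r\to\widehat\di^q$ delivers
$$\frac{a_r}{r^{\mathcal{Q}(q)}}\longrightarrow \rho_0\int_{\widehat B^q(0,1)}\bigl(1-\widehat\di^q(0,\cdot)\bigr)\de\Leb^n = \frac{\rho_0}{m(q)}.$$
Combining the displayed identities, $(\Phi_r)_\#\m_r^q \to m(q)\Leb^n$ weakly on bounded sets, establishing (iv).

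I expect the main obstacle to be precisely this measure side: transferring the Lebesgue point condition through the anisotropic dilation $\delta_r$ requires the Ball-Box theorem rather than any Euclidean estimate, since $\delta_r$ acts with different weights on different coordinates and the correct rescaling exponent is $\mathcal{Q}(q)$, not $n$. The independence of the limit from the sequence $r\to 0$ is automatic from the argument, which gives uniqueness of the tangent, while the identification of $(\widehat M^q,\widehat\di^q)$ as a quotient of a sub-Finsler Carnot group is Theorem \ref{thm:francomanca}.
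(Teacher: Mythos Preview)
Your proposal is correct and follows essentially the same route as the paper: work at a Lebesgue point in privileged coordinates, use $\delta_{1/r}$ as the comparison map, invoke Lemma \ref{lem:scaling} and Theorem \ref{thm:convergencetoNA} for items (i)--(iii), and handle item (iv) via the change of variables $z=\delta_r(w)$ combined with the Lebesgue point condition \eqref{eq:limit_density} to get $\rho\circ\delta_r\to\rho(0)$ in $L^1_{\mathrm{loc}}$ and to compute the normalization. The only cosmetic differences are that the paper phrases the measure convergence directly as an integral identity against test functions rather than as $L^1_{\mathrm{loc}}$ convergence of $\rho\circ\delta_r$, and it does not explicitly invoke the Ball-Box inclusion to control the domain of integration (it uses the Hausdorff convergence $\delta_{1/r}(B^\di(0,rR))\to B^{\widehat\di}(0,R)$ instead).
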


 \begin{proof}
     Let $q\in M$ be a Lebesgue point for $\m$ and let  $\widehat\psi^q:U\to \R^n$ be a chart of privileged coordinates at $q$, with $U$ open and relatively compact. Fix $\varepsilon,R>0$ and choose $r_0=r_0(\varepsilon,R)$ sufficiently small to ensure that $B^{\di_r}(q,R)=B^{\di}(q,rR)\subset U$ for every $r<r_0$. In this way, we can work in coordinates and, for ease of notation, we can identify $M$ with $\R^n$ and $q$ with the origin (we also drop the superscript $q$ of the nilpotent approximation). Then, define 
     \begin{equation}
         f^{\varepsilon,R}_r : B^{\di_r}(0,R)\to \R^n;\qquad f^{\varepsilon,R}_r(p):=\delta_{r}^{-1}(p),
     \end{equation}
     where $\delta_r:\R^n\to\R^n$ is the dilation defined in \eqref{eq:anis_dil}. Note that $f^{\varepsilon,R}_r(0)=0$ and therefore item (i) of Definition \ref{def:pmGH_convergence} is verified. Possibly taking a smaller $r_0$, we may assume that $B^\di(0,r_0R)\subset V$. As a consequence of Lemma \ref{lem:scaling}, we have 
     \begin{equation}
     \label{eq:comparison_sets}
        \delta_{1/r}(B^\di(0,rR))=B^{\di^r}(0,R),\qquad\forall\, r\leq r_0,
    \end{equation}
    From Theorem \ref{thm:convergencetoNA}, we immediately deduce that $\delta_{1/r}(B^\di(0,rR))\to B^{\widehat \di}(0,R)$ as $r\to 0$ in the Hausdorff topology, proving that also item (iii) of Definition \ref{def:pmGH_convergence} is verified. Combining this convergence with Lemma \ref{lem:scaling} and Theorem \ref{thm:convergencetoNA}, we obtain
    \begin{align*}
         \sup_{p,p' \in B^{\di_r}(0,R)} \Big|\di_r(p,p')- \widehat\di(f^{\varepsilon,R}_r(p), f^{\varepsilon,R}_r(p'))\Big| &=\sup_{p,p' \in B^{\di}(0,rR)} \Big|\frac1r\di(p,p')-\widehat\di\big(\delta_{\frac1r}(p),\delta_{\frac1r}(p')\big)\Big|\\
         &=\sup_{p,p' \in B^{\di}(0,rR)} \Big|\di^r\big(\delta_{\frac1r}(p),\delta_{\frac1r}(p')\big)- \widehat\di\big(\delta_{\frac1r}(p),\delta_{\frac1r}(p')\big)\Big|\\
         &=\sup_{x,x' \in \delta_{1/r}(B^{\di}(0,rR))} \big|\di^r(x,x')- \widehat\di(x,x')\big|\xrightarrow{r\to0} 0.
     \end{align*}
     Hence, we have proved that, up to possibly choosing a smaller $r_0$, item (ii) of Definition \ref{def:pmGH_convergence} is verified. Therefore we are left to check item (iv). 
     
     First of all, $\m$ is a bounded measure, hence $\widehat\psi^q_\#\m=\rho\Leb^n$ for a measurable function $\rho$ with $0<c\leq \rho\leq C$. Thus, since $0\in\R^n$ is a Lebesgue point for $\m$, after the change of variables $x\mapsto\delta^{-1}_r(x)$, we have 
     \begin{equation}
     \label{eq:density_point_change_of_var}
         0=\lim_{r\to 0}\frac{1}{r^{\mathcal Q(q)}} \int_{B^{\di}(0,r)}|\rho-\rho(0)|\de\Leb^n=\lim_{r\to 0}\int_{\delta^{-1}_r(B^{\di}(0,r))}|\rho\circ\delta_r-\rho(0)|\de\Leb^n,
     \end{equation}
     where $\rho(0)$ is strictly positive and finite. Second of all, for every $\varphi\in L^\infty(\R^n)$, we claim that 
     \begin{equation}
     \label{eq:final_claim}
         \lim_{r\to 0} \int_{\delta^{-1}_r(B^{\di}(0,rR))}\varphi(\rho\circ\delta_r)\de\Leb^n=\rho(0)\int_{ B^{\widehat\di}(0,R)}\varphi\de\Leb^n.
     \end{equation}
     Indeed, recall that $\delta^{-1}_r(B^\di(0,rR))\to B^{\widehat\di}(0,R) $ as $r\to 0$, as sets and we can compute: 
    \begin{multline}
        \left|\int_{\delta^{-1}_r(B^{\di}(0,rR))}\varphi(\rho\circ\delta_r)\de\Leb^n-\rho(0)\int_{ B^{ \widehat\di}(0,R)}\varphi\de\Leb^n\right|
        \\
        \leq\int_{\delta^{-1}_r(B^\di(0,rR))}\left|\varphi(\rho\circ\delta_r)-\rho(0)\varphi\right|\de\Leb^n+\rho(0)\left|\int_{\delta^{-1}_r(B^\di(0,rR))}\varphi\de\Leb^n-\int_{B^{\widehat\di}(0,R)}\varphi\de\Leb^n\right|,
    \end{multline}
    which converges to $0$ as a consequence of \eqref{eq:comparison_sets} and \eqref{eq:density_point_change_of_var}.
    We are in position to prove the weak convergence of the sequence $(f^{\varepsilon,R}_r)_\#\restr{\m_r^0}{B^{\di_r}(0,R)}$ to the tangent measure $\restr{ m(q)\Leb^n}{B^{\widehat\di}(0,R)}$.
    Let $\varphi\in C_c(\R^n)$, then by definition of $\m_r^0$, we have
     \begin{equation}
     \begin{split}
         \int_{\R^n} \varphi \de (f^{\varepsilon,R}_r)_\#(\restr{\m_r^0}{B^{\di_r}(0,R)})&=\left(\int_{B^{\di}(0,r)}1-\frac1r\di(\cdot,0)\de\m\right)^{-1}\int_{B^{\di_r}(0,R)}(\varphi\circ \delta_r^{-1})\rho\de\Leb^n \\
         &=\left(\int_{B^{\di}(0,r)}1-\di_r(\cdot,0)\de\m\right)^{-1}r^{\mathcal Q(q)}\int_{\delta_r^{-1}(B^{\di}(0,rR))}\varphi(\rho\circ\delta_r)\de\Leb^n,
    \end{split}
     \end{equation}
     having performed a change of variable $x\mapsto\delta_r(x)$. For the renormalization term, using a change of variable and \eqref{eq:final_claim}, we have  
 \begin{equation}
     \begin{split}
    \label{eq:aux_convergence}
         \int_{B^{\di}(0,r)}1-\di_r(\cdot,0)\de\m &=r^{\mathcal Q(q)}\int_{\delta^{-1}_r(B^{\di}(0,r))} (1-\di_r(\delta_r(\cdot),0))\rho\circ\delta_r\de\Leb^n \\ 
         &=r^{\mathcal Q(q)}\int_{\delta^{-1}_r(B^{\di}(0,r))} (1-\di^r(\cdot,0))\rho\circ\delta_r\de\Leb^n \stackrel{r\to0}{\sim}
 m(q)^{-1}\,\rho(0) r^{\mathcal Q(q)},
     \end{split}
     \end{equation}
     where $m(q)>0$ is defined in \eqref{eq:normalization_tangent_measure}. Hence, combining the convergence \eqref{eq:aux_convergence} with \eqref{eq:final_claim}, we may deduce that 
     \begin{equation}
     \begin{split}
         \lim_{r\to0}\int_{\R^n} \varphi \de (f^{\varepsilon,R}_r)_\#(\restr{\m_r^0}{B^{\di_r}(0,R)})&=m(q)\int_{B^{\widehat\di}(0,R)}\varphi\de\Leb^n=m(q)\int_{\R^n}\varphi\de(\restr{\Leb^n}{B^{\widehat\di}(0,R)}),
    \end{split}
     \end{equation}
     concluding the proof of item (iv).
 \end{proof}

In light of Remark \ref{rmk:mini_flex}, we obtain a straightforward corollary of the previous result. 

\begin{corollary}
\label{cor:straightforward}
     Let $M$ be a $n$-dimensional, complete and equiregular \sF manifold and let $\mathcal Q$ be its homogeneous dimension. Then, for $\Haus^\mathcal{Q}$-a.e.\ $q\in M$, we have 
     \begin{equation}
         {\rm Tan}(M,\di,\Haus^\mathcal{Q},q)=\{(\widehat M^q,\widehat\di^q,m(q)\Leb^n,0)\},
     \end{equation}
     where $(\widehat M^q,\widehat\di^q)$ is the nilpotent approximation of $M$ at $q$ and is isometric to a \sF Carnot group and $m(q)>0$ is defined by \eqref{eq:normalization_tangent_measure}.
\end{corollary}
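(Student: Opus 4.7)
The plan is to obtain Corollary \ref{cor:straightforward} as a direct specialization of Theorem \ref{thm:tangents}, applied with the reference measure $\m=\Haus^{\mathcal Q}$, and then to sharpen the description of the limit space by exploiting equiregularity.

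First, I would verify that $\Haus^{\mathcal Q}$ fits into the class of admissible measures of Theorem \ref{thm:tangents}, i.e.\ that it is bounded in the sense of Definition \ref{def:bounded_measure}. This is precisely the content of Remark \ref{rmk:mini_flex}: on an equiregular \sF manifold the uniform Ball-Box theorem (cf.\ Remark \ref{rmk:uniform_ball-box}) produces two-sided estimates of the form $C^{-1}r^{\mathcal Q}\le \Haus^{\mathcal Q}(B(p,r))\le Cr^{\mathcal Q}$ which are uniform on compact sets, so $\Haus^{\mathcal Q}$ is locally $\mathcal Q$-Ahlfors regular, and Corollary \ref{cor:density} then converts local Ahlfors regularity into boundedness.

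Second, I would apply Theorem \ref{thm:tangents} with $\m=\Haus^{\mathcal Q}$, yielding that for $\Haus^{\mathcal Q}$-a.e.\ $q\in M$ one has
\begin{equation}
    {\rm Tan}(M,\di,\Haus^{\mathcal Q},q)=\{(\widehat M^q,\widehat\di^q,m(q)\Leb^n,0)\},
\end{equation}
with the normalization constant $m(q)>0$ given by \eqref{eq:normalization_tangent_measure}. Third, I would upgrade the metric description of the tangent: since $M$ is equiregular, every point is regular, so Theorem \ref{thm:francomanca} guarantees that the normal subgroup $H^q$ is trivial at each such $q$ and hence $(\widehat M^q,\widehat \di^q)$ is isometric to a \sF Carnot group, not merely to a quotient of one.

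There is essentially no obstacle in this argument: the measure-theoretic blow-up and the metric identification of the tangent are already packaged in the preceding results, and equiregularity enters only in two cosmetic ways, namely to provide the Ahlfors regularity of $\Haus^{\mathcal Q}$ needed to invoke Theorem \ref{thm:tangents} and to eliminate the quotient in the Carnot group description.
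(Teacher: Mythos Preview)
Your proposal is correct and matches the paper's own argument, which simply invokes Remark \ref{rmk:mini_flex} to certify that $\Haus^{\mathcal Q}$ is a bounded measure and then applies Theorem \ref{thm:tangents}. Your explicit appeal to Theorem \ref{thm:francomanca} to upgrade from a quotient of a Carnot group to a genuine Carnot group is a point the paper leaves implicit but is indeed needed for the statement as written.
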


\begin{remark}
    For a \sF Carnot group of topological dimension $n$ and homogeneous dimension $\mathcal Q$, the Lebesgue measure $\Leb^n$ and the Hausdorff measure $\Haus^\mathcal{Q}$ are bi-invariant Haar measures, thus they coincide up to a scalar multiple. Therefore, Corollary \ref{cor:straightforward} says that
    \begin{equation}
        {\rm Tan}(M,\di,\Haus_\di^\mathcal{Q},q)=\{(\widehat M^q,\widehat\di^q,c\Haus_{\widehat\di^q}^\mathcal{Q},0)\},\qquad\text{for } \Haus_\di^\mathcal{Q}\text{-a.e.\ } q\in M.
    \end{equation}
\end{remark}

\begin{remark}
    Given a \sF manifold $M$ and a regular point $p\in M$, the previous result holds in a neighborhood $U$ of $p$ (where $\mathcal Q$ is replaced by the Hausdorff dimension of $U$). 
\end{remark}

\section{Applications to the curvature-dimension condition}
\label{sec:contact}

Recall that the Heisenberg group $\hei$ is the only Carnot group of topological dimension $3$ and Hausdorff dimension $4$. A \sF Heisenberg group is the Heisenberg group, equipped with a norm $\normdotc:\R^2\to\R_+$ on its first layer. We refer the reader to \cite[Sec.\ 3]{borza2024measure} for a precise definition. The next results deal with 3D-contact \sF manifolds, that are equiregular \sF manifolds with growth vector $(2,3)$. See \cite{ABB-srgeom} for details. We stress that we do not make any assumptions on the norm. 

\begin{prop}\label{prop:mancafranco}
    Let $(M,\xi,\normdotc)$ be a complete 3D-contact \sF manifold, equipped with a locally $4$-Ahlfors regular measure $\m$. Then, for $\m$-a.e. $q\in M$, we have 
     \begin{equation}
         {\rm Tan}(M,\di,\m,q)=\{(\hei,\widehat\di^q,m(q)\Leb^3,0)\},
     \end{equation}
     where $\hei$ is the Heisenberg group, $\widehat\di^q$ is the \sF distance on $\hei$ induced by $\normdotc_q:\R^2\to\R_+$, and $m(q)>0$ is defined by \eqref{eq:normalization_tangent_measure}. 
\end{prop}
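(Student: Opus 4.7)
The plan is to reduce to Theorem \ref{thm:tangents} and then identify the nilpotent approximation explicitly as the Heisenberg group. First, since $M$ is 3D-contact, it is equiregular with constant growth vector $(2,3)$, so every point has homogeneous dimension $\mathcal Q = 1\cdot 2 + 2\cdot 1 = 4$. Therefore, by Corollary \ref{cor:density}, the local $4$-Ahlfors regularity of $\m$ is equivalent to $\m$ being a bounded measure in the sense of Definition \ref{def:bounded_measure}. Hence Theorem \ref{thm:tangents} applies and yields, for $\m$-a.e.\ $q\in M$,
\begin{equation}
{\rm Tan}(M,\di,\m,q) = \{(\widehat M^q, \widehat \di^q, m(q)\Leb^3, 0)\},
\end{equation}
where $(\widehat M^q, \widehat \di^q)$ is the nilpotent approximation at $q$ and $m(q)>0$ is normalized as in \eqref{eq:normalization_tangent_measure}.

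It remains to identify $\widehat M^q$ with the Heisenberg group and to check that its distance is the one induced by $\normdotc_q$. Since $M$ is equiregular, every point is regular; hence Theorem \ref{thm:francomanca} gives that the nilpotent approximation at $q$ is isometric to a sub-Finsler Carnot group (with trivial quotient subgroup). Its underlying Lie algebra has topological dimension $3$, is stratified in $s(q)=2$ layers, and its first layer has dimension $n_1(q)=2$. Up to Lie algebra isomorphism there is only one such structure, namely the Heisenberg Lie algebra, so $\widehat M^q \cong \hei$ as Lie groups. Moreover, by \eqref{eq:def_nilp_norm}, the left-invariant norm on the horizontal distribution of $\widehat M^q$ is defined through the reference norm $\normdotc_q$ of the original sub-Finsler structure at $q$; hence $\widehat \di^q$ coincides with the sub-Finsler distance on $\hei$ induced by $\normdotc_q : \R^2 \to \R_+$, as claimed.

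The argument is essentially a direct application of the preceding results. The only external input is the classical Lie algebra classification singling out $\hei$ as the unique step-$2$ Carnot group of topological dimension $3$ with $2$-dimensional first layer. I do not expect any genuine obstacle.
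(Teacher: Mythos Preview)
Your proof is correct and follows essentially the same route as the paper: reduce to Theorem \ref{thm:tangents} (after converting local $4$-Ahlfors regularity to boundedness via Corollary \ref{cor:density}, since $M$ is equiregular with $\mathcal Q=4$), then invoke Theorem \ref{thm:francomanca} at a regular point and identify the resulting $3$-dimensional Carnot group as $\hei$. The paper's version is terser---it leaves the Ahlfors-to-bounded step and the norm identification implicit---but the ingredients are the same.
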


\begin{proof}
    Applying Theorem \ref{thm:tangents}, we deduce that, at $\m$-almost every point $q\in M$, the nilpotent approximation $(\widehat M^q,\widehat\di^q)$ of $M$, equipped with the measure $m(q)\Leb^3$ and with distinguished point $0$, is the unique (pointed) metric measure tangent at $q$. Since $q$ is a regular point, Theorem \ref{thm:francomanca} implies that $(\widehat M^q,\widehat\di^q)$ is a Carnot group of (topological) dimension $3$ and thus it must be the \sF Heisenberg group, where $\widehat\di^q$ is defined in Definition \ref{def:nilpotent_approx}.    
\end{proof}

\begin{prop}
\label{prop:brodo}
    Let $(M,\xi,\normdotc)$ be a complete 3D-contact \sF manifold, equipped with a locally $4$-Ahlfors regular measure $\m$. Then, $(M,\di,\m)$ does not satisfy the $\cd(K,N)$ condition for every $K\in\R$ and $N\in (1,\infty)$.
\end{prop}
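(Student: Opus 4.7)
\begin{pr}[Proof plan for Proposition \ref{prop:brodo}]
The strategy is a standard contradiction-by-blow-up argument, using the tangent space identification of Proposition \ref{prop:mancafranco} as the key input. Assume by contradiction that $(M,\di,\m)$ satisfies $\cd(K,N)$ for some $K\in\R$ and $N\in(1,\infty)$. Fix a point $q\in M$ for which the conclusion of Proposition \ref{prop:mancafranco} holds; such points exist in abundance since they form a set of full $\m$-measure. Along the rescaling sequence $\di_{r}:=r^{-1}\di$ and $\m_r^q$ (normalized as in \eqref{eq:tangent_measure}), the rescaled space satisfies $\cd(Kr^2, N)$, because the $\cd$ condition is invariant under multiplicative rescaling of the measure and scales the curvature parameter by $r^2$ under rescaling of the distance.

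Next, I would invoke stability of the curvature-dimension condition with respect to pointed measured Gromov-Hausdorff convergence (as established by Sturm and Lott--Villani, extended to the pointed setting by, e.g., Gigli--Mondino--Savar\'e). Since $(M,\di_{r},\m_r^q,q)$ converges to the tangent $(\hei,\widehat\di^q, m(q)\Leb^3,0)$ as $r\to 0$ by Proposition \ref{prop:mancafranco}, and $Kr^2\to 0$, stability gives that the limit space $(\hei,\widehat\di^q,m(q)\Leb^3)$ satisfies $\cd(0,N)$. Since the $\cd$ condition is also insensitive to multiplication of the reference measure by a positive constant, we conclude that the \sF Heisenberg group $(\hei,\widehat\di^q,\Leb^3)$ satisfies $\cd(0,N)$.

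The final step is to contradict this conclusion by appealing to the previously established failure of the $\cd$ condition in the \sF Heisenberg group: regardless of the choice of norm $\normdotc_q:\R^2\to\R_+$ on the horizontal layer, $(\hei,\widehat\di^q,\Leb^3)$ does not satisfy $\cd(K',N')$ for any $K'\in\R$, $N'\in(1,\infty)$, by the results of \cite{borzatashiro,borza2024measure,borza2024curvature,magnabosco2023failure}. In particular, $\cd(0,N)$ cannot hold, yielding the desired contradiction.

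The main obstacle is purely a bookkeeping one, namely verifying that the stability statement one invokes is available in the precise form needed: locally compact, not necessarily doubling or complete-at-scale, pointed metric measure spaces with a normalization of the reference measure as in \eqref{eq:tangent_measure}. This is exactly the framework developed in \cite{GigMonSav}, so the stability is applicable. Once this is in place, the rest of the proof is essentially automatic, since the hard analytic work has already been carried out in Proposition \ref{prop:mancafranco} (identification of the tangent) and in the Heisenberg failure results cited above (non-$\cd$ of the limit).
\end{pr}
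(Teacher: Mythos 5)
Your proposal is correct and follows essentially the same route as the paper: contradiction, the scaling property of $\cd$, stability under pmGH convergence combined with Proposition \ref{prop:mancafranco} to get $\cd(0,N)$ on the \sF Heisenberg tangent, and then the known failure of $\cd$ there (the paper cites specifically \cite[Thm.\ 1.6]{borza2024measure}, which covers arbitrary norms on the first layer). No substantive differences.
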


\begin{proof}
    Arguing by contradiction, we assume that $(M,\di,\m)$ satisfies the $\cd(K,N)$ condition for some $K\in\R$ and $N\in (1,\infty)$.
    The scaling property of the $\cd$ condition (see \cite[Prop.\ 1.4]{sturm2006}) guarantees that the metric measure space $(\X, \di_r,\m^x_r)$ satisfies the $\cd(r^2 K, N)$ condition. Moreover, as the $\cd$ condition is stable with respect to the pointed measured Gromov-Hausdorff convergence (see \cite{GigMonSav}), Proposition \ref{prop:mancafranco} allows us to conclude that 
    \begin{equation}
        (\hei,\widehat\di^q,m(q)\Leb^3,0) \quad \text{is a }\cd(0,N)\text{ space.} 
    \end{equation}
    Finally, Theorem 1.6 in \cite{borza2024measure} gives the desired contradiction.
\end{proof}

\begin{remark}
    In certain situations, the strategy used in the previous proof can be refined to show the failure of the measure contraction property $\MCP(K,N)$ (cf. \cite{ohta2007} for a definition). Indeed, under the same assumptions of Proposition \ref{prop:brodo}, if there exists a positive $\m$-measure set where $\normdotc_q$ is neither $C^1$ nor strongly convex, then Theorem 1.1 of \cite{borza2024measure} implies that $\MCP(K,N)$ fails for every $K\in\R$ and every $N\in (1,\infty)$. 
\end{remark}


The following corollary is a specification of Proposition \ref{prop:brodo} in the case of the Heisenberg group. We believe it is worth emphasize it, in light of \cite[Conj.\ 1.3]{magnabosco2023failure}.

\begin{corollary}
    \label{cor:brodoriscaldato}
    Let $\hei$ be a \sF Heisenberg group, equipped with a locally $4$-Ahlfors regular measure $\m$. Then, $(\hei,\di,\m)$ does not satisfy the $\cd(K,N)$ condition for every $K\in\R$ and $N\in (1,\infty)$.
\end{corollary}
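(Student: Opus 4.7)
The plan is to observe that the \sF Heisenberg group $\hei$ falls squarely within the class of spaces treated by Proposition \ref{prop:brodo}, so the corollary should reduce to checking the hypotheses of that proposition and invoking it directly. Concretely, I would verify three things: (i) $\hei$ is a smooth manifold of topological dimension $3$ whose sub-Finsler structure is obtained from a norm $\normdotc$ on the first layer of its Lie algebra, (ii) this structure is equiregular with growth vector $(2,3)$ at every point, hence it is a 3D-contact \sF manifold in the sense of Section \ref{sec:contact}, and (iii) as a left-invariant Carnot-type structure on a simply connected Lie group, $(\hei,\di)$ is a complete metric space.

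Once these three items are in place, the hypotheses of Proposition \ref{prop:brodo} are satisfied: $(M,\xi,\normdotc)$ is a complete 3D-contact \sF manifold with $M=\hei$, and $\m$ is locally $4$-Ahlfors regular by assumption (with $4$ matching the homogeneous dimension of $\hei$). Therefore Proposition \ref{prop:brodo} applies verbatim and yields that $(\hei,\di,\m)$ fails the $\cd(K,N)$ condition for every $K\in\R$ and $N\in(1,\infty)$.

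There is essentially no obstacle here, since the corollary is a specialization rather than a new argument; the work has already been done in Proposition \ref{prop:brodo}, whose strategy is to take the pmGH-tangent via Proposition \ref{prop:mancafranco}, use the scaling $\cd(K,N)\Rightarrow \cd(r^2K,N)$ and the pmGH-stability of $\cd$ to pass the hypothetical bound to a tangent cone, and then contradict \cite[Thm.\ 1.6]{borza2024measure} on the tangent \sF Heisenberg group. The only mildly nontrivial point to mention explicitly in the write-up is that in the case $M=\hei$ the tangent at every (regular) point is isometric, as a \sF Carnot group, to $\hei$ itself equipped with the same norm $\normdotc$ up to a left-translation, so invoking Proposition \ref{prop:brodo} is not circular but simply an application of a more general statement.
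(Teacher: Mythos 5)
Your proposal is correct and matches the paper exactly: the paper presents this corollary as an immediate specialization of Proposition \ref{prop:brodo}, giving no separate argument, and your verification that the \sF Heisenberg group is a complete 3D-contact \sF manifold (equiregular with growth vector $(2,3)$, complete by left-invariance) is precisely what is needed to invoke that proposition. Nothing is missing.
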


Finally, we extend the known \sr results to bounded measures.

\begin{prop}
    Let $M$ be a complete \sr manifold with $\dim M=n$, equipped with a bounded measure $\m$. Assume there exists an $\m$-positive set $\mathcal S\subset M$ such that $r(q)<n$, for every $q\in \mathcal S$. Then, $(M,\di,\m)$ does not satisfy the $\cd(K,N)$ condition for every $K\in\R$ and $N\in (1,\infty)$.
\end{prop}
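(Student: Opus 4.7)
\begin{pr}
The plan is to follow the blueprint of Proposition \ref{prop:brodo}, passing to the metric measure tangent at a well-chosen base point in $\mathcal S$ and invoking the already known failure of the curvature-dimension condition on \sr Carnot groups (and their quotients).

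First, I would pick $q\in\mathcal S$ so that Theorem \ref{thm:tangents} applies at $q$. Since $\mathcal S$ has positive $\m$-measure and, by Lemma \ref{lem:boh}, $\m$-almost every point of $M$ is a Lebesgue point for $\m$ with strictly positive density, such a $q$ exists. At this $q$ we have $r(q)<n$, hence $\dis_q\subsetneq T_qM$ and the homogeneous dimension satisfies $\mathcal Q(q)>n$; in particular, the nilpotent approximation $(\widehat M^q,\widehat\di^q)$ is a genuinely \sr (i.e., non-Riemannian) structure on $\R^n$.

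Next, I would argue by contradiction. Assuming $(M,\di,\m)$ satisfies $\cd(K,N)$ for some $K\in\R$ and $N\in(1,\infty)$, the scaling property of the curvature-dimension condition (cf.\ \cite[Prop.\ 1.4]{sturm2006}) gives that $(M,\di_r,\m_r^q)$ is a $\cd(r^2K,N)$ space for every $r\in(0,1)$. Combining the pmGH-stability of the $\cd$ condition (see \cite{GigMonSav}) with Theorem \ref{thm:tangents}, the tangent
\begin{equation*}
    (\widehat M^q,\widehat\di^q,m(q)\Leb^n,0)
\end{equation*}
must be a $\cd(0,N)$ space.

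Finally, I would derive the contradiction. By Theorem \ref{thm:francomanca}, $(\widehat M^q,\widehat\di^q)$ is isometric to a quotient $G^q/H^q$ of a \sr Carnot group, with $r(q)<n$ ensuring that its distribution is a proper subbundle. Endowed with (a multiple of) the Haar/Lebesgue measure, such a space is itself a complete \sr manifold equipped with a smooth, positive reference measure, so the failure results \cite{Juillet2020,MR4562156,MR4623954} apply and rule out the $\cd(0,N)$ condition, yielding the sought contradiction.

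The only subtlety I anticipate is the quotient case $G^q/H^q$ when $q$ is singular: one should check that the cited failure results apply there as well. This can be circumvented, if needed, by noting that the singular set is a countable union of proper smooth submanifolds, hence Lebesgue-negligible in coordinates and therefore $\m$-negligible by boundedness of $\m$; one can then choose $q\in\mathcal S$ to be regular, so that Theorem \ref{thm:francomanca} directly yields a genuine Carnot group and the classical failure results apply unchanged.
\end{pr}
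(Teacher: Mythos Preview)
Your proposal is correct and follows essentially the same route as the paper. The only difference is that the paper sidesteps the Carnot-group detour (and hence the whole regular/singular discussion) by applying \cite[Thm.\ 1.2]{MR4623954} directly to the nilpotent approximation $(\widehat M^q,\widehat\di^q, m(q)\Leb^n)$, viewed simply as a complete \sr manifold with smooth positive measure and $r(0)<n$.
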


\begin{proof}
    Assume by contradiction that $(M,\di,\m)$ satisfies the $\cd(K,N)$ condition for some $K\in\R$ and $N\in (1,\infty)$. Since $\mathcal{S}$ has positive measure, we can find $q\in \mathcal{S}$ which is a Lebesgue point for $\m$, cf.\ Lemma \ref{lem:boh}. By Theorem \ref{thm:nilp_approx}, the unique (pointed) metric measure tangent of $M$ at $q$ is a \sr manifold $(\widehat M^q,\widehat\di^q)$, equipped with the measure $m(q)\Leb^n$ and with distinguished point $0$. 
    Arguing as in the proof of Proposition \ref{prop:brodo}, we conclude that $(\widehat M^q,\widehat\di^q, m(q)\Leb^n)$ is a $\cd(0,N)$ space. This is in contradiction with \cite[Thm.\ 1.2]{MR4623954}, since $r(0)<n$. 
\end{proof}

\bibliographystyle{alpha} 
\bibliography{bibliography}

\begin{thebibliography}{ALDNG23}

\bibitem[ABB20]{ABB-srgeom}
Andrei Agrachev, Davide Barilari, and Ugo Boscain.
\newblock {\em A comprehensive introduction to sub-{R}iemannian geometry}, volume 181 of {\em Cambridge Studies in Advanced Mathematics}.
\newblock Cambridge University Press, Cambridge, 2020.
\newblock From the Hamiltonian viewpoint, With an appendix by Igor Zelenko.

\bibitem[ALDNG23]{MR4645068}
Gioacchino Antonelli, Enrico Le~Donne, and Sebastiano Nicolussi~Golo.
\newblock Lipschitz {C}arnot-{C}arath\'{e}odory structures and their limits.
\newblock {\em J. Dyn. Control Syst.}, 29(3):805--854, 2023.

\bibitem[Bel96]{MR1421822}
Andr\'{e} Bella\"{\i}che.
\newblock The tangent space in sub-{R}iemannian geometry.
\newblock In {\em Sub-{R}iemannian geometry}, volume 144 of {\em Progr. Math.}, pages 1--78. Birkh\"{a}user, Basel, 1996.

\bibitem[BMRT24a]{borza2024curvature}
Samuël Borza, Mattia Magnabosco, Tommaso Rossi, and Kenshiro Tashiro.
\newblock The curvature exponent of sub-finsler heisenberg groups.
\newblock {\em ArXiv 2407.14619}, 2024.

\bibitem[BMRT24b]{borza2024measure}
Samuël Borza, Mattia Magnabosco, Tommaso Rossi, and Kenshiro Tashiro.
\newblock Measure contraction property and curvature-dimension condition on sub-{F}insler {H}eisenberg groups.
\newblock {\em ArXiv 2402.14779}, 2024.

\bibitem[BT23]{borzatashiro}
Samuël Borza and Kenshiro Tashiro.
\newblock Measure contraction property, curvature exponent and geodesic dimension of sub-{F}insler {H}eisenberg groups.
\newblock {\em ArXiv 2305.16722}, 2023.

\bibitem[GJ15]{MR3388884}
R.~Ghezzi and F.~Jean.
\newblock Hausdorff volume in non equiregular sub-{R}iemannian manifolds.
\newblock {\em Nonlinear Anal.}, 126:345--377, 2015.

\bibitem[GJ16]{SRmeasures}
R.~Ghezzi and F.~Jean.
\newblock On measures in sub-{R}iemannian geometry.
\newblock {\em S\'eminaire TSG}, 33:17--46, 2015-2016.

\bibitem[GMS15]{GigMonSav}
Nicola Gigli, Andrea Mondino, and Giuseppe Savar\'{e}.
\newblock Convergence of pointed non-compact metric measure spaces and stability of {R}icci curvature bounds and heat flows.
\newblock {\em Proc. Lond. Math. Soc. (3)}, 111(5):1071--1129, 2015.

\bibitem[HKST15]{MR3363168}
Juha Heinonen, Pekka Koskela, Nageswari Shanmugalingam, and Jeremy~T. Tyson.
\newblock {\em Sobolev spaces on metric measure spaces}, volume~27 of {\em New Mathematical Monographs}.
\newblock Cambridge University Press, Cambridge, 2015.
\newblock An approach based on upper gradients.

\bibitem[Jea14]{MR3308372}
Fr\'{e}d\'{e}ric Jean.
\newblock {\em Control of nonholonomic systems: from sub-{R}iemannian geometry to motion planning}.
\newblock SpringerBriefs in Mathematics. Springer, Cham, 2014.

\bibitem[Jui21]{Juillet2020}
Nicolas Juillet.
\newblock Sub-{R}iemannian structures do not satisfy {R}iemannian {B}runn-{M}inkowski inequalities.
\newblock {\em Rev. Mat. Iberoam.}, 37(1):177--188, 2021.

\bibitem[LV09]{lott--villani2009}
John Lott and C\'{e}dric Villani.
\newblock Ricci curvature for metric-measure spaces via optimal transport.
\newblock {\em Ann. of Math. (2)}, 169(3):903--991, 2009.

\bibitem[Mit85]{MR806700}
John Mitchell.
\newblock On {C}arnot-{C}arath\'{e}odory metrics.
\newblock {\em J. Differential Geom.}, 21(1):35--45, 1985.

\bibitem[MR23a]{MR4562156}
Mattia Magnabosco and Tommaso Rossi.
\newblock Almost-{R}iemannian manifolds do not satisfy the curvature-dimension condition.
\newblock {\em Calc. Var. Partial Differential Equations}, 62(4):Paper No. 123, 2023.

\bibitem[MR23b]{magnabosco2023failure}
Mattia Magnabosco and Tommaso Rossi.
\newblock Failure of the curvature-dimension condition in sub-finsler manifolds.
\newblock {\em ArXiv 2307.01820}, 2023.

\bibitem[Oht07]{ohta2007}
Shin-ichi Ohta.
\newblock On the measure contraction property of metric measure spaces.
\newblock {\em Comment. Math. Helv.}, 82(4):805--828, 2007.

\bibitem[RS23]{MR4623954}
Luca Rizzi and Giorgio Stefani.
\newblock Failure of curvature-dimension conditions on sub-{R}iemannian manifolds via tangent isometries.
\newblock {\em J. Funct. Anal.}, 285(9):Paper No. 110099, 31, 2023.

\bibitem[Stu06a]{sturm2006-1}
Karl-Theodor Sturm.
\newblock On the geometry of metric measure spaces. {I}.
\newblock {\em Acta Math.}, 196(1):65--131, 2006.

\bibitem[Stu06b]{sturm2006}
Karl-Theodor Sturm.
\newblock On the geometry of metric measure spaces. {II}.
\newblock {\em Acta Math.}, 196(1):133--177, 2006.

\end{thebibliography}

\end{document}